\newcommand{\scal}[2]{\langle #1,#2\rangle}
\newcommand{\rr}[1]{\mathbf R^{#1}}
\newcommand{\nm}[2]{\Vert #1\Vert _{#2}}
\newcommand{\nmm}[1]{\Vert #1\Vert }
\newcommand{\abp}[1]{\vert #1\vert}
\newcommand{\op}{\operatorname{Op}}
\newcommand{\sets}[2]{\{ {\,}#1{\,};{\,}#2{\,}\} }
\newcommand{\ep}{\varepsilon}
\newcommand{\cdo}{\, \cdot \, }
\newcommand{\wpr}{{\text{\footnotesize{$\#$}}}}
\newcommand{\eabs}[1]{\langle #1\rangle}
\newcommand{\vrum}{\vspace{0.1cm}}
\newcommand{\GL}{\mathbf{M}}
\newcommand{\masfQ}{\mathsf Q}
\newcommand{\masfR}{\mathsf R}
\newcommand{\maclS}{\mathcal S}
\newcommand{\maclV}{\mathcal V}
\newcommand{\mascF}{\mathscr F}
\newcommand{\mascS}{\mathscr S}
\newcommand{\mascP}{\mathscr P}
\newcommand{\splM}{\EuScript M}
\newcommand{\splW}{\EuScript W}
\numberwithin{equation}{section}          
\newtheorem{thm}{Theorem}
\numberwithin{thm}{section}
\newcommand{\rubrik}{}
\newtheorem{prop}[thm]{Proposition}
\newtheorem{lemma}[thm]{Lemma}
\theoremstyle{definition}
\newtheorem{defn}[thm]{Definition}
\theoremstyle{remark}
\newtheorem{rem}[thm]{Remark}              
\title{Multi-linear products with
odd factors in pseudo-differential
calculus with symbols in modulation spaces}
\author{Joachim Toft}
\address{Department of Mathematics, Linn{\ae}us University, V{\"a}xj{\"o},
Sweden}
\email{joachim.toft@lnu.se}
\keywords{Weyl product, modulation spaces, twisted convolution, sharpness.
MSC 2010 codes: 35S05,42B35,44A35,46E35,46F12}
\begin{document}

\begin{abstract}
We give sufficient conditions on the Lebesgue exponents for
compositions of odd numbers of pseudo-differential
operators with symbols in modulation spaces. As a byproduct,
we obtain sufficient conditions for twisted convolutions of odd
numbers of factors to be bounded on Wiener amalgam spaces.
\end{abstract}

\maketitle

\section{Introduction}\label{sec0}

\par


In the paper we deduce multi-linear Weyl products
and other products in pseudo-differential calculus
of odd factors with symbols belonging to suitable
modulation spaces. Especially we improve some
of the odd multi-linear products in \cite{CoToWa}. Here we
notice that for corresponding bilinear products, the results in
\cite{CoToWa} are sharp.

\par

Suppose that $N\ge 1$ is an integer. Then
it follows from \cite[Proposition 2.5]{CoToWa} that
\begin{gather}
M^{p_1,q_1}\wpr M^{p_2,q_2}\wpr M^{p_3,q_3} \wpr
\cdots \wpr M^{p_N,q_N}
\subseteq
M^{p_0',q_0'}
\label{Eq:NonWeightModEmb}
\intertext{holds true when}
\max \left ( \masfR _N(\textstyle{\frac 1{q'}}) ,0\right )
\le
\min \left ( 
{\textstyle{\frac 1{p_j},\frac 1{p_j'},\frac 1{q_j},\frac 1{q_j'}},\masfR _N(\frac 1p) }
\right ).
\label{Eq:LebExpCondProp2.5}
\end{gather}
Here
$$
\masfR _N (x) = \frac 1{N-1}\left (
\sum _{j=0}^Nx_j -1
\right ),
\qquad x=(x_0,x_1,\dots ,x_N)\in [0,1]^{N+1},
$$
and we observe that \cite[Proposition 2.5]{CoToWa}
is a weighted version of \eqref{Eq:NonWeightModEmb}
and \eqref{Eq:LebExpCondProp2.5}.
(See \cite{Ho3} and Section \ref{sec1} for notations.) We notice
that the conditions on $\frac 1{p_j'}$ and $\frac 1{q_j}$ in
\eqref{Eq:LebExpCondProp2.5} can be
outlined because they are always fulfilled when the other conditions
hold (see e.{\,}g. Theorem 0.1$'$ in \cite{CoToWa} and its proof).

\par

The multi-linear multiplication property \eqref{Eq:NonWeightModEmb}
with \eqref{Eq:LebExpCondProp2.5} is obtained by interpolating the case
that \eqref{Eq:NonWeightModEmb} holds true when
\eqref{Eq:LebExpCondProp2.5} is replaced by
\begin{equation}\tag*{(\ref{Eq:LebExpCondProp2.5})$'$}
\masfR _N(\textstyle{\frac 1{q'}}) \le 0 \le \masfR _N(\textstyle{\frac 1p}),
\end{equation}
(see Proposition 2.1 in \cite{CoToWa}) with the case
\begin{equation}\tag*{(\ref{Eq:NonWeightModEmb})$'$}
M^{2,2}\wpr M^{2,2}\wpr M^{2,2}\wpr \cdots \wpr M^{2,2}
\subseteq M^{2,2}.
\end{equation}
Note that the last the last multiplication property is equivalent with
the fact that compositions of Hilbert-Schmidt operators result
into new Hilbert-Schmidt operators.

\par

In the paper we improve \eqref{Eq:NonWeightModEmb}
and \eqref{Eq:LebExpCondProp2.5} for multi-linear products
with odd factors, by replacing \eqref{Eq:NonWeightModEmb}$'$
with the more general
\begin{equation}\tag*{(\ref{Eq:NonWeightModEmb})$''$}
M^{p,p}\wpr M^{p',p'}\wpr M^{p,p}\wpr \cdots \wpr M^{p,p}
\subseteq M^{p,p}
\end{equation}
in the interpolation with \eqref{Eq:NonWeightModEmb}
and \eqref{Eq:LebExpCondProp2.5}$'$. This leads to
that \eqref{Eq:NonWeightModEmb} remains true for odd $N$,
when \eqref{Eq:LebExpCondProp2.5} is replaced by
\begin{equation}\tag*{(\ref{Eq:LebExpCondProp2.5})$''$}
\max \left ( \masfR _N(\textstyle{\frac 1{q'}}) ,0\right )
\le
\min \left ( 
{\textstyle{\masfQ _N(\frac 1{p}),\masfQ _N(\frac 1{q'}),
\masfQ _N(\frac 1{p},\frac 1{q'}),
\masfR _N(\frac 1p) }} \right ).
\end{equation}
Here
$$
\masfQ _N (x,y) = \min _{j+k\, \text{odd}}(\textstyle{\frac 12}(x_j+y_k),
1-\textstyle{\frac 12}(x_j+y_k))
\quad \text{and}\quad
\masfQ _N (x) = \masfQ _N (x,x),
$$
when
$$
x=(x_0,x_1,\dots ,x_N)\in [0,1]^{N+1}
\quad \text{and}\quad
y=(y_0,y_1,\dots ,y_N)\in [0,1]^{N+1}.
$$
In fact, in Section \ref{sec2} we deduce weighted
versions of \eqref{Eq:NonWeightModEmb} 
and \eqref{Eq:LebExpCondProp2.5}$'$ (see Proposition
\ref{Prop:MainPropOdd} and Theorem
\ref{Thm:MainThmOdd} in Section
\ref{sec2}).

\medspace

We observe that \eqref{Eq:NonWeightModEmb} is close
to certain regularization techniques for linear operators in
distribution theory.
In fact, a convenient way to regularizing operators might be to
enclose them with regularizing operators. For example,
suppose that $T$ is a linear and continuous operator
from the Schwartz space $\mascS (\rr d)$ to the
set of tempered distributions $\mascS '(\rr d)$.
The operator $T$ possess weak continuity
properties in the sense that it is only guaranteed that
$T$ maps elements from $\mascS (\rr d)$ into the
significantly larger space $\mascS '(\rr d)$. On the other hand,
by enclosing $T$ with operators $S_1,S_2$
which are regularizing in the sense that they are
mapping $\mascS '(\rr d)$ into the smaller space $\mascS (\rr d)$,
the resulting operator
$$
T_0=S_1\circ T\circ S_2
$$
becomes again regularizing. Equivalently, by using the kernel theorem of
Schwartz and identifying kernels with operators one has that
\begin{equation}\label{Eq:OpThreeKernels}
\begin{gathered}
(K_1,K_2,K_3) \mapsto K_1\circ K_2\circ K_3,
\\[1ex]
\mascS (\rr {2d})\times \mascS '(\rr {2d})\times \mascS (\rr {2d})
\to
\mascS (\rr {2d}),
\end{gathered}
\end{equation}
is sequently continuous. Here we observe that by omitting one of
the compositions one may only guarantee that
$$
(K_1,K_2)\mapsto K_1\circ K_2
$$
is sequently continuous from
$\mascS (\rr {2d})\times \mascS '(\rr {2d})$
or
$\mascS '(\rr {2d})\times \mascS (\rr {2d})$
to $\mascS '(\rr {2d})$, which is a significantly weaker
continuity property. 

\par

The continuity of the map \eqref{Eq:OpThreeKernels} can be obtained
by rewriting $K_1\circ K_2\circ K_3$ as
\begin{align*}
K_1\circ K_2\circ K_3(x,y) &= R_{K_2,F}(x,y)\equiv
\scal {K_2}{F(x,y,\cdo )},
\intertext{where}
F(x,y,x_1,x_2) &= (K_1\otimes K_3)(x,x_1,x_2,y)
=
K_1(x,x_1)K_3(x_2,y).
\end{align*}
The asserted continuity then follows from the facts
that $(K,F)\mapsto R_{K,F}$ is
continuous from $\mascS '(\rr {2d})\times \mascS (\rr {4d})$
to $\mascS (\rr {2d})$, and that $(K_1,K_3)\mapsto K_1\otimes K_3$
is continuous from $\mascS (\rr {2d})\times \mascS (\rr {2d})$
to $\mascS (\rr {4d})$.

\par

The mapping properties can also be conveniently formulated
within the theory of pseudo-differential calculus, e.{\,}g. in the Weyl calculus.
Recall that the Weyl product $\wpr$ of the elements
$a_1,a_2\in \mascS (\rr {2d})$ are defined by the formula
$$
\op ^w(a_1\wpr a_2) = \op ^w(a_1)\circ \op ^w(a_2),
$$
where $\op ^w(a)$ is the Weyl operator of $a\in \mascS (\rr {2d})$,
defined by
$$
\op ^w(a)f(x) = (2\pi)^{-d}\iint _{\rr {2d}}a({\textstyle{\frac 12}}(x+y),\xi )
f(y)e^{i\scal {x-y}\xi}\, dyd\xi ,
$$
when $f\in \mascS (\rr d)$. The definition of $\op ^w(a)$ extends
to a continuous operator from $\mascS '(\rr d)$ to $\mascS (\rr d)$.
We may also extend the definition of $\op ^w(a)$ to allow
$a\in \mascS '(\rr {2d})$, and then $\op ^w(a)$ is continuous from
$\mascS (\rr d)$ to $\mascS '(\rr d)$.

\par

By straight-forward Fourier techniques it follows that
\eqref{Eq:OpThreeKernels} is equivalent to
\begin{align}
\mascS (\rr {2d})\wpr \mascS '(\rr {2d})\wpr \mascS (\rr {2d})
&\subseteq
\mascS (\rr {2d}),
\label{Eq:ThreeSymbolsSchwartz}
\intertext{which by duality gives}
\mascS '(\rr {2d})\wpr \mascS (\rr {2d})\wpr \mascS '(\rr {2d})
&\subseteq
\mascS '(\rr {2d}),
\label{Eq:ThreeSymbolsSchwartzDual}
\end{align}
Again we observe that for the bilinear case we only have
$$
\mascS (\rr {2d})\wpr \mascS '(\rr {2d})\subseteq
\mascS '(\rr {2d}),
$$
and that $\mascS '(\rr {2d})\wpr \mascS '(\rr {2d})$ does not
make any sense.

\par

By similar arguments, \eqref{Eq:ThreeSymbolsSchwartz}
and \eqref{Eq:ThreeSymbolsSchwartzDual} can be extended
to any odd-linear Weyl products. That is, one has
\begin{align}
\mascS (\rr {2d})\wpr \mascS '(\rr {2d})\wpr \mascS (\rr {2d})
\wpr \cdots \wpr \mascS (\rr {2d})
&\subseteq
\mascS (\rr {2d}),
\tag*{(\ref{Eq:ThreeSymbolsSchwartz})$'$}
\intertext{and}
\mascS '(\rr {2d})\wpr \mascS (\rr {2d})\wpr \mascS '(\rr {2d})
\wpr \cdots \wpr \mascS '(\rr {2d})
&\subseteq
\mascS '(\rr {2d}),
\tag*{(\ref{Eq:ThreeSymbolsSchwartzDual})$'$}
\end{align}
which are analogous to \eqref{Eq:NonWeightModEmb}$''$,
taking into account that $M^{p',p'}$ is the dual of $M^{p,p}$
when $p<\infty$.
%
%

\par


\par

\section{Preliminaries}\label{sec1}

\par

In this section we introduce notation and discuss the background on 
Gelfand--Shilov spaces, 
pseudo-differential operators, the Weyl
product, twisted convolution and modulation spaces.
Most proofs can be found in the literature and are therefore omitted. 

\par

Let $0<h,s\in \mathbf R$ be fixed. The space $\mathcal S_{s,h}(\rr d)$
consists of all $f\in C^\infty (\rr d)$ such that
\begin{equation*}
\nm f{\mathcal S_{s,h}}\equiv \sup \frac {|x^\beta \partial ^\alpha
f(x)|}{h^{|\alpha | + |\beta |}\alpha !^s\, \beta !^s}
\end{equation*}
is finite, with supremum taken over all $\alpha ,\beta \in
\mathbf N^d$ and $x\in \rr d$.

\par

The space $\mathcal S_{s,h} \subseteq
\mathscr S$ ($\mathscr S$ denotes the Schwartz space) is a Banach space
which increases with $h$ and $s$.
Inclusions between topological spaces are understood to be continuous. 
If $s>1/2$, or $s =1/2$ and $h$ is sufficiently large, then $\mathcal
S_{s,h}$ contains all finite linear combinations of Hermite functions.
Since the space of such linear combinations is dense in $\mathscr S$, it follows
that the topological dual $(\mathcal S_{s,h})'(\rr d)$ of $\mathcal S_{s,h}(\rr d)$ is
a Banach space which contains $\mathscr S'(\rr d)$.

\par

\subsection{Gelfand-Shilov spaces of functions and distributions}

\par

The \emph{Gelfand--Shilov spaces} $\mathcal S_{s}(\rr d)$ and
$\Sigma _{s}(\rr d)$ (cf. \cite{GS}) are the inductive and projective limits, respectively,
of $\mathcal S_{s,h}(\rr d)$, with respect to the parameter $h$. Thus
\begin{equation}\label{GSspacecond1}
\mathcal S_{s}(\rr d) = \bigcup _{h>0}\mathcal S_{s,h}(\rr d)
\quad \text{and}\quad \Sigma _{s}(\rr d) =\bigcap _{h>0}\mathcal S_{s,h}(\rr d),
\end{equation}
where $\mathcal S_{s}(\rr d)$ is equipped with the the strongest topology such
that the inclusion map from $\mathcal S_{s,h}(\rr d)$ into $\mathcal S_{s}(\rr d)$
is continuous, for every choice of $h>0$. The space $\Sigma _s(\rr d)$ is a
Fr{\'e}chet space with seminorms
$\nm \cdo{\mathcal S_{s,h}}$, $h>0$. We have $\Sigma _s(\rr d)\neq \{ 0\}$
if and only if $s>1/2$, and $\maclS _s(\rr d)\neq \{ 0\}$
if and only if $s\ge 1/2$. From now on we assume that $s>1/2$ when we
consider $\Sigma _s(\rr d)$, and $s\ge 1/2$ when we consider $\maclS
_s(\rr d)$.

\medspace

The \emph{Gelfand--Shilov distribution spaces} $\mathcal S_{s}'(\rr d)$
and $\Sigma _s'(\rr d)$ are the projective and inductive limit
respectively of $\mathcal S_s'(\rr d)$.  This means that
\begin{equation}\tag*{(\ref{GSspacecond1})$'$}
\mathcal S_s'(\rr d) = \bigcap _{h>0}\mathcal S_{s,h}'(\rr d)\quad
\text{and}\quad \Sigma _s'(\rr d) =\bigcup _{h>0} \mathcal S_{s,h}'(\rr d).
\end{equation}
In \cite{GS, Ko, Pil} it is proved that $\mathcal S_s'(\rr d)$
is the topological dual of $\mathcal S_s(\rr d)$, and $\Sigma _s'(\rr d)$
is the topological dual of $\Sigma _s(\rr d)$.

\par

For each $\ep >0$ and $s>1/2$ we have
\begin{equation}\label{GSembeddings}
\begin{alignedat}{3}
\maclS _{1/2}(\rr d) & \subseteq &\Sigma _s (\rr d) & \subseteq&
\maclS _s(\rr d) & \subseteq \Sigma _{s+\ep}(\rr d)
\\[1ex]
\quad \text{and}\quad
\Sigma _{s+\ep}' (\rr d) & \subseteq  & \maclS _s'(\rr d)
& \subseteq & \Sigma _s'(\rr d) & \subseteq \maclS _{1/2}'(\rr d).
\end{alignedat}
\end{equation}

\par

The Gelfand--Shilov spaces are invariant under several basic operations,
e.{\,}g. translations, dilations, tensor products
and (partial) Fourier transformation.

\par

We normalize the Fourier transform of $f\in L^1(\rr d)$ as
$$
(\mathscr Ff)(\xi )= \widehat f(\xi ) \equiv (2\pi )^{-\frac d2}\int _{\rr
{d}} f(x)e^{-i\scal  x\xi }\, dx,
$$
where $\scal \cdo \cdo$ denotes the scalar
product on $\rr d$. The map $\mathscr F$ extends
uniquely to homeomorphisms on $\mathscr S'(\rr d)$, $\mathcal S_s'(\rr d)$
and $\Sigma _s'(\rr d)$, and restricts to
homeomorphisms on $\mathscr S(\rr d)$, $\mathcal S_s(\rr d)$ and
$\Sigma _s(\rr d)$, and to a unitary operator on $L^2(\rr d)$.

\par

\subsection{Modulation spaces}

\par

Next we turn to the basic properties of modulation spaces, and start by
recalling the conditions for the involved weight functions. Let
$0<\omega \in L^\infty _{loc}(\rr d)$. Then $\omega$ is called
\emph{moderate}  if there is a function $0<v\in L^\infty _{loc}(\rr d)$
such that
\begin{equation}\label{moderate}
\omega (x+y) \lesssim \omega (x)v(y),\quad x,y\in \rr d.
\end{equation}
Then $\omega$ is also called \emph{$v$-moderate}.
Here the notation $f(x) \lesssim g(x)$ means that there exists $C>0$
such that $f(x) \leq C g(x)$ for all arguments $x$ in the domain of $f$
and $g$. If $f \lesssim g$ and $g \lesssim f$ we write $f \asymp g$. 
The function $v$ is called \emph{submultiplicative}
if it is even and \eqref{moderate} holds when $\omega =v$. We note that if
\eqref{moderate} holds then
$$
v^{-1}\lesssim \omega  \lesssim v.
$$
For such $\omega$ it follows that \eqref{moderate} is true when
$$
v(x) =Ce^{c|x|},
$$
for some positive constants $c$ and $C$ (cf. \cite{Grochenig5}).
In particular, if $\omega$ is moderate on $\rr d$, then
$$
e^{-c|x|}\lesssim \omega (x)\lesssim e^{c|x|},
$$
for some $c>0$.

\par

The set of all moderate functions on $\rr d$
is denoted by $\mascP _E(\rr d)$.
If $v$ in \eqref{moderate}
can be chosen as  $v(x)=\eabs{x}^s=(1+|x|^2)^{s/2}$ for some
$s \ge 0$, then $\omega$ is
said to be of polynomial type or polynomially moderate. We let
$\mascP (\rr d)$ be the set
of all polynomially moderate functions on $\rr d$.

\medspace

Let $\phi \in \maclS _s (\rr d) \setminus 0$ be
fixed. The \emph{short-time Fourier transform} (STFT) $V_\phi f$ of $f\in
\maclS _s ' (\rr d)$ with respect to the \emph{window function} $\phi$ is
the Gelfand--Shilov distribution on $\rr {2d}$ defined by
$$
V_\phi f(x,\xi ) \equiv 
\mascF (f \, \overline {\phi (\cdo -x)})(\xi ).
$$

\par

For $a \in \maclS _{1/2} '(\rr {2d})$ and
$\Phi \in \maclS _{1/2} (\rr {2d})  \setminus 0$ the
\emph{symplectic short-time Fourier transform} $\maclV _{\Phi} a$
of $a$ with respect to $\Phi$ is the defined similarly as
\begin{equation}\nonumber
\maclV _{\Phi} a(X,Y) = \mascF _\sigma \big( a\, \overline{ \Phi (\cdo -X) }
\big) (Y),\quad X,Y \in \rr {2d}.
\end{equation}
We have
\begin{multline}\label{stftcompare}
\maclV _{\Phi} a(X,Y) = 2^d V_\Phi a(x,\xi , -2\eta ,2y),
\\[1ex]
X=(x,\xi )\in \rr {2d},\ Y=(y,\eta )\in \rr {2d},
\end{multline}
which shows the close connection between $V_\Phi a$
and $\maclV _{\Phi} a$. 
The Wigner distribution $W_{f,\phi}$ and $V_\phi f$ are also closely related.

\par

If $f ,\phi \in \maclS _s (\rr d)$ and $a,\Phi \in \maclS _s (\rr {2d})$ then
\begin{align*}
V_\phi f(x,\xi ) &= (2\pi )^{-\frac d2}\int f(y)\overline {\phi
(y-x)}e^{-i\scal y\xi}\, dy 
\intertext{and}
\maclV _\Phi a(X,Y ) &= \pi ^{-d}\int a(Z)\overline {\Phi
(Z-X)}e^{2i\sigma (Y,Z)}\, dZ .
\end{align*}

\par

Let $\omega \in \mascP _E (\rr {2d})$, $p,q\in [1,\infty ]$
and $\phi \in \maclS _{1/2} (\rr d)\setminus 0$ be fixed. The
\emph{modulation space} $M^{p,q}_{(\omega )}(\rr d)$ consists of
all $f\in \maclS _{1/2} '(\rr d)$ such that
\begin{align}
\nm f{M^{p,q}_{(\omega )}} &\equiv \Big (\int _{\rr d}\Big (\int _{\rr d}
|V_\phi f(x,\xi )\omega (x,\xi )|^p\, dx\Big )^{q/p}\, d\xi \Big )^{1/q}
\label{modnorm1}
\intertext{is finite, and the Wiener amalgam
space $W^{p,q}_{(\omega )} (\rr d)$ consists of all $f\in
\maclS _{1/2} '(\rr d)$ such that}
\nm f {W^{p,q}_{(\omega )}} &\equiv \Big (\int _{\rr d}\Big (\int _{\rr d}
|V_\phi f(x,\xi )\omega (x,\xi )|^q\, d\xi \Big )^{p/q}\, dx \Big )^{1/p}
\label{modnorm2}
\end{align}
is finite (with obvious modifications in \eqref{modnorm1} and
\eqref{modnorm2} when $p=\infty$ or $q=\infty$).

\par

\begin{rem}\label{MoreWeightClasses}
As follows from Proposition \ref{p1.4} (2) below we have that in fact
$M^{p,q}_{(\omega )}(\rr d)$ contains the superspace $\Sigma _1(\rr d)$
of $\maclS _{1/2}(\rr d)$, and is contained in the subspace $\Sigma _1'(\rr d)$
of $\maclS _{1/2}'(\rr d)$, when $\omega \in \mascP _E(\rr {2d})$. Hence we
could from the beginning have assumed that $f \in \Sigma _1'(\rr d)$ in
\eqref{modnorm1} and \eqref{modnorm2}.

\par

On the other hand, in \cite{Toft8}, certain weight classes containing $\mascP
_E(\rr {2d})$ and superexponential weights are introduced. For any $s>1/2$,
the corresponding families of modulation spaces are large enough to contain
superspaces of $\maclS _s'(\rr d)$ and subspaces of $\maclS _s(\rr d)$.

\par

However, we are not dealing with these large families of modulation spaces
because we need (1) and (2) in Proposition \ref{p1.4} which are not known to be
true for weights of this generality.
\end{rem}

\par

\begin{rem}
The literature contains slightly different conventions
concerning modulation and Wiener amalgam spaces. Sometimes our
definition of a Wiener amalgam space is considered as a particular
case of a general class of modulation spaces (cf.
\cite{Feichtinger1,Feichtinger2,Feichtinger6}). Our definition is
adapted to give the relation \eqref{twistfourmod} that suits our
purpose to transfer continuity for the Weyl product on
modulation spaces to continuity for twisted convolution on Wiener
amalgam spaces. 
\end{rem}

\par

On the even-dimensional phase space $\rr {2d}$ we may define
modulation spaces based on the symplectic STFT. 
Thus if $\omega \in \mascP _E (\rr {4d})$, $p,q\in [1,\infty ]$
and $\Phi \in \maclS _{1/2} (\rr {2d})\setminus 0$ are fixed, then
the \emph{symplectic modulation spaces}
$\splM ^{p,q}_{(\omega )}(\rr {2d})$ and Wiener amalgam spaces $\splW ^{p,q}
_{(\omega )}(\rr {2d})$ are obtained by replacing
the STFT $a\mapsto V_\Phi a$ by the corresponding
symplectic version $a\mapsto \maclV _\Phi a$ in \eqref{modnorm1} and
\eqref{modnorm2}.
(Sometimes the word \emph{symplectic} before modulation space is
omitted for brevity.) 
By \eqref{stftcompare} we have
$$
\splM ^{p,q}_{(\omega )}(\rr {2d}) = M ^{p,q}_{(\omega _0)}(\rr {2d}),
\quad \omega(x,\xi, y, \eta) = \omega_0 (x,\xi, -2 \eta, 2 y).
$$
It follows that all properties which are valid for $M ^{p,q}_{(\omega )}$
carry over to $\splM ^{p,q}_{(\omega )}$.

\par

From
\begin{equation}\label{FourSTFTs}
V_{\widehat \phi}\widehat f (\xi ,-x) =e^{i\scal x\xi}V_{\phi}f(x,\xi )
\end{equation}
it follows that
$$
f\in W^{q,p}_{(\omega )}(\rr d)\quad \Leftrightarrow \quad
\widehat f\in M^{p,q}_{(\omega _0)}(\rr d),\qquad \omega _0(\xi
,-x)=\omega (x,\xi ).
$$
In the symplectic situation these formulas read
\begin{equation}\label{stftsymplfour}
\maclV _{\mascF _\sigma \Phi}(\mascF _\sigma a)(X,Y) =
e^{2i\sigma (Y,X)}\maclV _\Phi a(Y,X)
\end{equation}
and
\begin{equation}\label{twistfourmod}
\mascF _\sigma \splM ^{p,q}_{(\omega )}(\rr {2d}) = \splW
^{q,p}_{(\omega _0)}(\rr {2d}), \qquad \omega _0(X,Y)=\omega (Y,X).
\end{equation}

\par

For brevity we denote $\splM ^p _{(\omega )}= \splM ^{p,p}_{(\omega
)}$, $\splW ^p_{(\omega )}=\splW ^{p,p}_{(\omega
)}$, and when $\omega \equiv 1$ we write $\splM ^{p,q}=\splM
^{p,q}_{(\omega )}$ and $\splW ^{p,q}=\splW ^{p,q}_{(\omega )}$. We
also let $\splM ^{p,q} _{(\omega )} (\rr {2d})$ be the
completion of $\maclS _s(\rr {2d})$ with
respect to the norm $\nm \cdo {\splM ^{p,q}_{(\omega )}}$.

\par

In the following proposition we list some basic facts on invariance, growth
and duality for modulation spaces. For any $p\in (0,\infty]$, its conjugate
exponent $p'\in [1,\infty ]$ is defined by
$$
p'=
\begin{cases}
\infty , & p\in (0,1],
\\[1ex]
\displaystyle{\frac p{p-1}}, & p\in (1,\infty),
\\[2ex]
1, & p=\infty .
\end{cases}
$$ 
Since our main results are formulated in terms of
symplectic modulation spaces, we state the result for them
instead of the modulation spaces $M ^{p,q}_{(\omega )}(\rr {d})$.

\par

\begin{prop}\label{p1.4}
Let $p,q,p_j,q_j\in [1,\infty ]$ for $j=1,2$, and $\omega
,\omega _1,\omega _2,v\in \mascP _E (\rr {4d})$ be such that
$v=\check v$, $\omega$ is $v$-moderate and $\omega _2\lesssim
\omega _1$. Then the following is true:
\begin{enumerate}
\item[{\rm{(1)}}] $a\in \splM ^{p,q}_{(\omega )}(\rr {2d})$ if and only if
\eqref{modnorm1} holds for any $\phi \in \splM ^1_{(v)}(\rr {2d})\setminus
0$. Moreover, $\splM ^{p,q}_{(\omega )}(\rr {2d})$ is a
Banach space under the norm
in \eqref{modnorm1} and different choices of $\phi$ give rise to
equivalent norms;

\vrum

\item[{\rm{(2)}}] if  $p_1\le p_2$ and $q_1\le q_2$  then
$$
\Sigma _1 (\rr {2d})\subseteq \splM ^{p,q}_{(\omega )}(\rr
{2d})\subseteq \Sigma _1 '(\rr {2d})
\quad \text{and}\quad
\splM ^{p_1,q_1}_{(\omega _1)}(\rr
{2d})\subseteq \splM ^{p_2,q_2}_{(\omega _2)}(\rr {2d}).
$$
If in addition $v\in \mascP (\rr {2d})$, then
$$
\mascS (\rr {2d})\subseteq \splM ^{p,q}_{(\omega )}(\rr
{2d})\subseteq \mascS '(\rr {2d});
$$

\vrum

\item[{\rm{(3)}}] the $L^2$ inner product $( \cdo ,\cdo )_{L^2}$ on $\maclS _{1/2}$
extends uniquely to a continuous sesquilinear form $( \cdo ,\cdo )$
on $\splM ^{p,q}_{(\omega )}(\rr {2d})\times \splM ^{p'\! ,q'}_{(1/\omega )}(\rr {2d})$.
On the other hand, if $\nmm a = \sup \abp {(a,b)}$, where the supremum is
taken over all $b\in \maclS _{1/2} (\rr {2d})$ such that
$\nm b{\splM ^{p',q'}_{(1/\omega )}}\le 1$, then $\nmm {\cdot}$ and $\nm
\cdot {\splM ^{p,q}_{(\omega )}}$ are equivalent norms;

\vrum

\item[{\rm{(4)}}] if $p,q<\infty$, then $\maclS _{1/2} (\rr {2d})$ is dense in
$\splM ^{p,q}_{(\omega )}(\rr {2d})$ and the dual space of $\splM
^{p,q}_{(\omega )}(\rr {2d})$ can be identified
with $\splM ^{p'\! ,q'}_{(1/\omega )}(\rr {2d})$, through the form
$(\cdo  ,\cdo )$. Moreover, $\maclS _{1/2} (\rr {2d})$ is weakly dense
in $\splM ^{p' ,q'}_{(\omega )}(\rr {2d})$ with respect to the form $(\cdo  ,\cdo )$
provided $(p,q) \neq (\infty,1)$ and $(p,q) \neq (1,\infty)$;

\vrum

\item[{\rm{(5)}}] if $p,q,r,s,u,v \in [1,\infty]$, $0\le \theta \le 1$,
\begin{equation*}
\frac1p = \frac {1-\theta }{r}+\frac {\theta}{u} \quad
\text{and} \quad \frac 1q = \frac {1-\theta }{s}+\frac {\theta}{v},
\end{equation*}
then complex interpolation gives
\begin{equation*}
(\splM ^{r,s}_{(\omega )},\splM ^{u,v}_{(\omega )})_{[\theta ]}
= \splM ^{p,q}_{(\omega )}.
\end{equation*}
\end{enumerate}
Similar facts hold if the $\splM ^{p,q}_{(\omega )}$ spaces are replaced by
the $\splW ^{p,q}_{(\omega )}$ spaces.
\end{prop}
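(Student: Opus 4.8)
The plan is to reduce everything to the standard theory of modulation spaces $M^{p,q}_{(\omega)}(\rr d)$ over ordinary $d$-dimensional configuration space, and then transport the statements to the symplectic setting and to Wiener amalgam spaces by means of the structural identities already recorded in the excerpt. Concretely, the symplectic modulation space $\splM^{p,q}_{(\omega)}(\rr{2d})$ is, via \eqref{stftcompare}, nothing but $M^{p,q}_{(\omega_0)}(\rr{2d})$ for the affinely transformed weight $\omega_0(x,\xi,y,\eta)=\omega(x,\xi,-2\eta,2y)$; since the linear change of variables $(y,\eta)\mapsto(-2\eta,2y)$ is invertible and preserves the class $\mascP_E$ as well as $v$-moderateness (with $v$ replaced by the correspondingly transformed $v_0$, which still satisfies $v_0=\check v_0$), every assertion for $M^{p,q}_{(\omega_0)}$ yields the analogous assertion for $\splM^{p,q}_{(\omega)}$. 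Likewise, for the Wiener amalgam statements one uses \eqref{twistfourmod}, $\mascF_\sigma\splM^{p,q}_{(\omega)}=\splW^{q,p}_{(\omega_0)}$ with $\omega_0(X,Y)=\omega(Y,X)$, together with the symplectic Fourier transform being a homeomorphism on $\maclS_{1/2}'$ and restricting to one on $\maclS_{1/2}$; invariance, inclusion, duality and interpolation all commute with applying $\mascF_\sigma$, so $\splW^{p,q}_{(\omega)}$ inherits each property from $\splM^{p,q}_{(\omega)}$. So the real content is the five items for $M^{p,q}_{(\omega)}(\rr d)$.

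For those, I would proceed item by item, each being classical (Gröchenig, and for the Gelfand--Shilov window range the references already cited). For (1), fix a reference window $\phi_0\in\maclS_{1/2}\setminus 0$; the key inequality is the pointwise STFT change-of-window estimate $|V_\phi f(x,\xi)|\lesssim (|V_{\phi_0}f|\ast|V_\phi\phi_0|)(x,\xi)$, and the fact that for $\phi,\phi_0\in\splM^1_{(v)}$ one has $V_\phi\phi_0\in L^1_{(v)}(\rr{2d})$; a convolution (Young-type) estimate in the mixed-norm weighted space $L^{p,q}_{(\omega)}$ with $v$-moderate $\omega$ then gives norm equivalence for windows in $\splM^1_{(v)}\setminus 0$ and, in particular, for $\phi\in\maclS_{1/2}\setminus 0$ since $\maclS_{1/2}\subseteq\splM^1_{(v)}$; completeness follows because $L^{p,q}_{(\omega)}$ is complete and $f\mapsto V_\phi f$ identifies $M^{p,q}_{(\omega)}$ with a closed subspace. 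For (2), the weight inclusion $\splM^{p_1,q_1}_{(\omega_1)}\subseteq\splM^{p_2,q_2}_{(\omega_2)}$ under $p_1\le p_2$, $q_1\le q_2$, $\omega_2\lesssim\omega_1$ is immediate from (1) and monotonicity of mixed Lebesgue norms on the (locally) finite-measure situation provided by the rapid decay of $V_\phi f$ for nice $f$; the endpoint embeddings $\Sigma_1\subseteq\splM^{p,q}_{(\omega)}\subseteq\Sigma_1'$ (and the $\mascS$-version under the polynomial hypothesis on $v$) come from the fact that for $f\in\Sigma_1$ the STFT decays like $e^{-c|(x,\xi)|^{...}}$, which dominates the at most exponential growth of $\omega\in\mascP_E$, together with duality for the reverse inclusion. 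For (3) and (4), one uses Parseval/Moyal for the STFT, $( f,g)=(2\pi)^{?}\,( V_\phi f,V_\phi g)_{L^2(\rr{2d})}$ with appropriate normalization, to realize the $L^2$ pairing as a weighted mixed-norm pairing $L^{p,q}_{(\omega)}\times L^{p',q'}_{(1/\omega)}$; Hölder gives continuity of the form, density of $\maclS_{1/2}$ (when $p,q<\infty$) gives the dual identification, and the norm-by-duality characterization $\nmm a\asymp\nm a{\splM^{p,q}_{(\omega)}}$ follows from the converse Hölder together with that density; the weak-density clause for the conjugate exponents outside $(\infty,1)$ and $(1,\infty)$ is the usual statement that $\maclS_{1/2}$ is weak-$*$ dense in the dual. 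For (5), complex interpolation of $M^{p,q}_{(\omega)}$ reduces, through the retract $f\mapsto V_\phi f$ with coretract given by the adjoint STFT (inversion formula), to Stein interpolation of weighted mixed-norm Lebesgue spaces $L^{r,s}_{(\omega)}$, $L^{u,v}_{(\omega)}$, which is standard.

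The main obstacle, and the only point requiring genuine care rather than citation, is making sure the \emph{symplectic} reductions are airtight: namely that the affine substitutions appearing in \eqref{stftcompare} and \eqref{twistfourmod} send the hypothesis class $\bigl(v=\check v,\ \omega\ v\text{-moderate},\ \omega_2\lesssim\omega_1\bigr)$ to an analogous hypothesis class, so that the transported statements are exactly items (1)--(5) with the stated (un-transformed) weights, and that no constant or Jacobian factor spoils the norm equivalences. Once the dictionary $\splM^{p,q}_{(\omega)}\leftrightarrow M^{p,q}_{(\omega_0)}$ and $\splW^{q,p}_{(\omega_0)}=\mascF_\sigma\splM^{p,q}_{(\omega)}$ is checked to respect these classes, all five items for the symplectic spaces and their Wiener amalgam analogues follow formally from the classical $M^{p,q}_{(\omega)}(\rr d)$ theory, and the proof is complete.
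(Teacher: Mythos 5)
Your overall strategy is the same as the paper's: the paper itself does not reprove these facts but observes that $\splM ^{p,q}_{(\omega )}(\rr {2d})=M^{p,q}_{(\omega _0)}(\rr {2d})$ for a linearly transformed weight (so that everything carries over from ordinary modulation spaces), transfers to $\splW ^{p,q}_{(\omega )}$ via \eqref{twistfourmod}, and then cites Gr{\"o}chenig for (1)--(2) and Toft for (3)--(5). Your sketches of (1), (3), (4), (5) are exactly the standard arguments behind those citations (change of window via the convolution estimate, Moyal/Parseval plus H{\"o}lder and density for the duality, retract onto weighted mixed-norm spaces for complex interpolation), and your care about the weight classes being preserved under the affine substitutions is legitimate but unproblematic, since those substitutions are invertible linear maps.

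There is, however, one step that would fail as written: your justification of the embedding $\splM ^{p_1,q_1}_{(\omega _1)}\subseteq \splM ^{p_2,q_2}_{(\omega _2)}$ for $p_1\le p_2$, $q_1\le q_2$ by ``monotonicity of mixed Lebesgue norms'' on a ``locally finite-measure situation provided by the rapid decay of $V_\phi f$ for nice $f$''. On $\rr {2d}$ (infinite measure) the inclusion $L^{p_1}\subseteq L^{p_2}$ is false, and rapid decay of $V_\phi f$ is neither available for a general element of $\splM ^{p_1,q_1}_{(\omega _1)}$ nor, for ``nice'' $f$, does it yield a bound with a constant independent of $f$ --- which is what the embedding requires (and a density argument cannot rescue it when $p$ or $q$ is infinite). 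The correct argument is the one you already have in hand from item (1): the reproducing/change-of-window estimate $|V_\phi f|\lesssim |V_\phi f|\ast |V_\phi \phi |$ combined with Young's inequality $L^{p_1}\ast L^{r}\subseteq L^{p_2}$, $1+\frac 1{p_2}=\frac 1{p_1}+\frac 1r$, using that $v\cdot V_\phi \phi$ lies in every $L^r$ for windows in $\maclS _{1/2}$; alternatively one discretizes with Gabor frames, where $\ell ^{p_1}\subseteq \ell ^{p_2}$ does hold. This is precisely the route the paper indicates (Gr{\"o}chenig's Chapter 12, or Lemma 11.3.3 together with Young's inequality), so the fix is local and does not affect the rest of your plan.
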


\par

The proof of Proposition \ref{p1.4}
can be found in \cite {Cordero,Feichtinger1,Feichtinger2,
Feichtinger3, Feichtinger4, Feichtinger5, Grochenig2, Toft2,
Toft4, Toft5, Toft8}.

\par

In fact, (1) follows from Gr{\"o}chenig's argument verbatim in
\cite[Proposition 11.3.2 (c)]{Grochenig2}. Note that the window
class $\splM ^1_{(v)}(\rr {2d})$ in (1) contains $\Sigma _1(\rr {2d})$,
which in turn contains $\maclS _{1/2}(\rr {2d})$. Furthermore, if in
addition $v\in \mascP (\rr {4d})$,
then $\splM ^1_{(v)}(\rr {2d})$ contains $\mascS (\rr {2d})$.

\par

The proof of (2) in \cite[Chapter 12]{Grochenig2} is based on Gabor
frames and formulated for polynomial type weights $\mascP
(\rr {4d})$. These arguments also hold for the broader weight class $\mascP
_E(\rr {4d})$. Another way to prove this is by means of
\cite[Lemma 11.3.3]{Grochenig2} and Young's inequality.

\par

The assertions (3)--(5) in Proposition \ref{p1.4} can be found for
more general weights in Theorem 4.17, and a combination of
Theorem 3.4 and Proposition 5.2  in \cite{Toft8}.

\par

\begin{rem}
For $p,q\in (0,\infty ]$ (instead of $p,q\in [1,\infty ]$ as in Proposition
\ref{p1.4}), $a\in \splM ^{p,q}_{(\omega )}(\rr {2d})$ if and only if
\eqref{modnorm1} holds for any $\phi \in \Sigma _1(\rr {2d})\setminus
0$. Moreover, $\splM ^{p,q}_{(\omega )}(\rr {2d})$ is a
quasi-Banach space under the quasi-norm
in \eqref{modnorm1} and different choices of $\phi$ give rise to
equivalent quasi-norms. (See e.{\,}g. \cite{GaSa,Toft13}.) 
\end{rem}

\par

\begin{rem}\label{remGSmodident}
Let $\mathcal P$ be the set of all
$\omega \in \mascP _E(\rr {4d})$ such that
$$
\omega (X,Y ) = e^{c(|X|^{1/s}+|Y|^{1/s})},
$$
for some $c>0$. (Note that this implies that $s\ge 1$.) Then
\begin{alignat*}{2}
\bigcap _{\omega \in \mathcal P}\splM ^{p,q}_{(\omega )}(\rr {2d}) &=
\Sigma _s(\rr {2d}),
&\quad \phantom{\text{and}}\quad
\bigcup _{\omega \in \mathcal P}\splM ^{p,q}_{(1/\omega )}(\rr {2d}) &=
\Sigma _s'(\rr {2d})
\\[1ex]
\bigcup _{\omega \in \mathcal P}\splM ^{p,q}_{(\omega )}(\rr {2d}) &=
\maclS _s(\rr {2d}),
&
\bigcap _{\omega \in \mathcal P}\splM ^{p,q}_{(1/\omega )}(\rr {2d}) &=
\maclS _s'(\rr {2d}),
\end{alignat*}
and for $\omega \in \mathcal P$
$$
\Sigma _s(\rr {2d})\subseteq \splM ^{p,q}_{(\omega )}(\rr {2d})
\subseteq
\maclS _s(\rr {2d}) \quad \text{and}\quad
\maclS _s'(\rr {2d}) \subseteq \splM ^{p,q}_{(1/\omega )}(\rr {2d}) \subseteq
\Sigma _s'(\rr {2d}).
$$
(Cf. \cite[Prop.~4.5]{CPRT10}, \cite[Prop.~4]{GZ}, \cite[Cor.~5.2]{Pilipovic1} and
\cite[Thm.~4.1]{Teo2}. See also \cite[Thm.~3.9]{Toft8} for an extension of these
inclusions to broader classes of Gelfand--Shilov and modulation spaces.)
\end{rem}

\medspace

By Proposition \ref{p1.4} (4) we have norm density of $\maclS _{1/2}$ in
$\splM ^{p,q}_{(\omega )}$ when $p,q<\infty$. We may relax the assumptions
on $p$, provided we replace the norm convergence with \emph{narrow
convergence}.
This concept, that allows us to approximate elements in $\splM ^{\infty,q}
_{(\omega )}(\rr {2d})$ for $1 \leq q < \infty$, 
is treated in \cite{Sjo1,Toft2,Toft4}, and, for
the current setup of possibly exponential weights, in \cite{Toft8}.
(Sj{\"o}strand's original definition in \cite{Sjo1} is somewhat different.)
Narrow convergence is defined by means of the function
$$
H_{a,\omega ,p}(Y) \equiv \| \maclV _\Phi a(\cdot,Y)\omega (\cdot,Y) \|
_{L^p(\rr {2d})}, \quad Y \in \rr {2d},
$$
for $a \in \maclS _{1/2}'(\rr {2d})$, $\omega \in \mascP _E(\rr {4d})$,
$\Phi \in \maclS _{1/2}(\rr {2d}) \setminus 0$ and $p\in [1,\infty]$.

\par

\begin{defn}\label{p2.1}
Let $p,q\in [1,\infty
]$, and $a,a_j\in
\splM ^{p,q}_{(\omega )}(\rr {2d})$, $j=1,2,\dots \ $. Then $a_j$
is said to \emph{converge narrowly} to $a$ with respect to $p,q$, $\Phi \in
\maclS _{1/2}(\rr {2d})\setminus 0$ and $\omega \in \mascP
_E(\rr {4d})$, if there exist $g_j,g \in L^q(\rr {2d})$ such that:

\begin{enumerate}
\item[{\rm{(1)}}] $a_j\to a$ in $\maclS _{1/2}'(\rr {2d})$ as $j\to \infty$;

\par

\item[{\rm{(2)}}] $H_{a_j,\omega ,p} \le g_j$ and $g_j \to g$ in $L^q(\rr
{2d})$ and a.{\,}e. as $j\to \infty$.
\end{enumerate}
\end{defn}

\par

\begin{prop}\label{narrowprop}
If $\omega \in \mascP _E(\rr {4d})$ and $1 \le q < \infty$ then the following
is true: 
\begin{enumerate}
\item[{\rm{(1)}}] $\maclS _{1/2}(\rr {2d})$ is
dense in $\splM ^{\infty,q}_{(\omega )}(\rr {2d})$ with respect to narrow
convergence;
\item[{\rm{(2)}}] $\splM ^{\infty,q}_{(\omega )}(\rr {2d})$ is sequentially
complete with respect to the topology defined by narrow convergence. 
\end{enumerate}
\end{prop}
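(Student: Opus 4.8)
The plan is to run everything at the level of the symplectic short-time Fourier transform, with a single fixed Gaussian window, and to reduce both parts to one reproducing-kernel estimate together with the elementary fact that Gaussian decay beats the (at most exponential) growth of weights in $\mascP _E$. Fix $\Phi \in \maclS _{1/2}(\rr {2d})\setminus 0$ a suitably normalized Gaussian, so that $\maclV _\Phi \Phi $ has Gaussian decay in all variables and the symplectic inversion formula $a=c\iint \maclV _\Phi a(Z,W)\,e^{2i\sigma (W,\cdo )}\Phi (\cdo -Z)\, dZ\, dW$ holds. It is standard that every $\omega \in \mascP _E(\rr {4d})$ is equivalent to a continuous $v$-moderate weight, and we assume henceforth that $\omega $ itself is continuous; since translations and modulations depend continuously on their parameters in $\maclS _{1/2}$, the function $(X,Y)\mapsto \maclV _\Phi a(X,Y)\omega (X,Y)$ is then continuous for every $a\in \maclS _{1/2}'(\rr {2d})$, which lets us compute $H_{a,\omega ,\infty}(Y)=\sup _X|\maclV _\Phi a(X,Y)\omega (X,Y)|$ as a supremum over a fixed countable dense set of $X$. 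For $a\in \splM ^{\infty ,q}_{(\omega )}(\rr {2d})$ let $a_R$ denote the synthesis of the restriction of $\maclV _\Phi a$ to $\{\,|X|\le R,\ |Y|\le R\,\}$. From the reproducing formula $|\maclV _\Phi a_R|$ is dominated by the ordinary convolution of the truncation of $|\maclV _\Phi a|$ with $|\maclV _\Phi \Phi |$ (the phase in the twisted convolution disappears under the modulus); combining this with $v$-moderateness of $\omega $ and the Gaussian decay of $\maclV _\Phi \Phi $ gives
\begin{equation*}
H_{a_R,\omega ,\infty}(Y)\ \lesssim \ g_R(Y):=\int _{|W|\le R}H_{a,\omega ,\infty}(W)\,\rho (Y-W)\, dW\ \le \ g(Y):=\big(H_{a,\omega ,\infty}*\rho \big)(Y),
\end{equation*}
where $\rho (W)=\int _{\rr {2d}}|\maclV _\Phi \Phi (Z,W)|\,v(Z,W)\, dZ$ inherits Gaussian decay from $\maclV _\Phi \Phi $, so $\rho \in L^1(\rr {2d})$.

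To prove (1), take $a\in \splM ^{\infty ,q}_{(\omega )}(\rr {2d})$ and let $a_R$ be as above. Each $a_R$ lies in $\maclS _{1/2}(\rr {2d})$: it is a Bochner integral over the compact set $\{|Z|\le R,\ |W|\le R\}$ of the continuous $\maclS _{1/2}$-valued map $(Z,W)\mapsto \maclV _\Phi a(Z,W)\,e^{2i\sigma (W,\cdo )}\Phi (\cdo -Z)$ — here one uses that $\maclV _\Phi a$ is continuous, hence bounded on compacts, and that time-frequency shifts of a Gaussian stay in $\maclS _{1/2}$ with seminorm bounds locally uniform in the parameters. Since $a\in \splM ^{\infty ,q}_{(\omega )}$ forces $|\maclV _\Phi a|$ to grow at most exponentially, the Moyal pairing $(a,\psi )=c\iint \maclV _\Phi a\,\overline{\maclV _\Phi \psi }$ converges absolutely for every $\psi \in \maclS _{1/2}(\rr {2d})$ (the Gaussian decay of $\maclV _\Phi \psi $ beats that exponential growth), so dominated convergence yields $a_R\to a$ in $\maclS _{1/2}'(\rr {2d})$ as $R\to \infty $. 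Finally, by the engine estimate $H_{a_R,\omega ,\infty}\le g_R$ with $g_R,g\in L^q(\rr {2d})$ by Young's inequality, $g_R\uparrow g$ pointwise by monotone convergence, and $\nm {g-g_R}{L^q}\lesssim \nm {H_{a,\omega ,\infty}}{L^q(\{|W|>R\})}\,\nm \rho {L^1}\to 0$ since $q<\infty $. Thus the requirements of Definition \ref{p2.1} hold, $a_R\to a$ narrowly with respect to $p=\infty $, $q$, $\Phi $ and $\omega $, and (1) follows, using additionally the standard fact that different admissible windows give the same notion of narrow convergence.

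For (2), it suffices to show that $\splM ^{\infty ,q}_{(\omega )}(\rr {2d})$ is closed under narrow limits, namely: if $a_j\in \splM ^{\infty ,q}_{(\omega )}$, $a_j\to a$ in $\maclS _{1/2}'(\rr {2d})$ for some $a$, and there are $g_j\in L^q(\rr {2d})$ with $H_{a_j,\omega ,\infty}\le g_j$ and $g_j\to g$ in $L^q$ and a.{\,}e., then $a\in \splM ^{\infty ,q}_{(\omega )}$ — and then, by definition, $a_j\to a$ narrowly. Testing the convergence $a_j\to a$ in $\maclS _{1/2}'$ against the fixed functions $e^{2i\sigma (Y,\cdo )}\Phi (\cdo -X)$ gives $\maclV _\Phi a_j(X,Y)\to \maclV _\Phi a(X,Y)$ for every $(X,Y)$, hence for each $X$ and a.{\,}e. $Y$
\begin{equation*}
\omega (X,Y)|\maclV _\Phi a(X,Y)|=\lim _j\omega (X,Y)|\maclV _\Phi a_j(X,Y)|\le \liminf _j H_{a_j,\omega ,\infty}(Y)\le \liminf _j g_j(Y)=g(Y).
\end{equation*}
Taking the supremum over $X$ in a countable dense set (legitimate by the continuity reduction above) gives $H_{a,\omega ,\infty}\le g$ a.{\,}e., so $\nm a{\splM ^{\infty ,q}_{(\omega )}}=\nm {H_{a,\omega ,\infty}}{L^q}\le \nm g{L^q}<\infty $ and $a\in \splM ^{\infty ,q}_{(\omega )}(\rr {2d})$; this is exactly sequential completeness with respect to narrow convergence.

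The main obstacle is the engine estimate of the first paragraph: one must verify that the reproducing formula really produces the stated convolution bound and that the resulting kernel $\rho $ is integrable — this is what forces a Gaussian window, since an only exponentially decaying one would not control the worst weights in $\mascP _E$. Two further points, routine but not automatic, are the measure-theoretic passage from the pointwise bounds, valid for each $X$ and a.{\,}e. $Y$, to the bound on $H_{a,\omega ,\infty}=\sup _X(\cdots )$ — which rests on replacing $\omega $ by an equivalent continuous weight and on continuity of $\maclV _\Phi a$ in $X$ — and the absolute convergence of the Moyal pairing behind $a_R\to a$ in $\maclS _{1/2}'$, which uses membership in $\splM ^{\infty ,q}_{(\omega )}$ rather than merely in $\maclS _{1/2}'$. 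Once these are settled, parts (1) and (2) are short.
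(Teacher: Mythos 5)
Your argument is correct and is essentially the standard proof that the paper itself omits and defers to \cite{CoToWa}: part (1) by compact truncation of the STFT followed by re-synthesis, with the convolution estimate $H_{a_R,\omega ,\infty}\lesssim H_{a,\omega ,\infty}*\rho$, $\rho \in L^1$, supplying the dominating functions required by Definition \ref{p2.1}, and part (2) by pointwise convergence of the STFT together with a Fatou argument and the continuity reduction for the supremum over $X$. The only caveat is interpretive rather than mathematical: in (2) you establish that $\splM ^{\infty ,q}_{(\omega )}(\rr {2d})$ is closed under narrow limits whose existence in $\maclS _{1/2}'(\rr {2d})$ is assumed, which is the intended content of ``sequential completeness'' here since the paper never defines narrowly Cauchy sequences.
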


\par

We refer to \cite{CoToWa} for a proof of Proposition
\ref{narrowprop}.

\par

\subsection{Pseudo-differential operators}

\par

Next we recall some basic facts from pseudo-differential calculus
(cf. \cite{Ho3}). Let $\GL (d,\Omega)$ be the set of
all $d\times d$ matrices with entries in $\Omega$,
$s\ge 1/2$, $a\in \maclS _s
(\rr {2d})$, and $A\in \GL (d,\mathbf R)$ be fixed. Then the
pseudo-differential operator $\op _A(a)$ defined by
\begin{equation}\label{e0.5}
\op _A(a)f(x) = (2\pi )^{-d}\iint _{\rr {2d}}a(x-A(x-y),\xi )
f(y)e^{i\scal {x-y}\xi}\, dyd\xi
\end{equation}
is a linear and continuous operator on $\maclS _s (\rr d)$.
For $a\in \maclS _s'(\rr {2d})$ the
pseudo-differential operator $\op _A(a)$ is defined as the continuous
operator from $\maclS _s(\rr d)$ to $\maclS _s'(\rr d)$ with
distribution kernel given by
\begin{equation}\label{atkernel}
K_{a,A}(x,y)=(2\pi )^{-\frac d2}(\mascF _2^{-1}a)(x-A(x-y),x-y).
\end{equation}
Here $\mascF _2F$ is the partial Fourier transform of $F(x,y)\in
\maclS _s'(\rr {2d})$ with respect to the variable $y \in \rr d$. This
definition generalizes \eqref{e0.5} and is well defined, since the mappings
\begin{equation}\label{homeoF2tmap}
\mascF _2\quad \text{and}\quad F(x,y)\mapsto F(x-A(x-y),y-x)
\end{equation}
are homeomorphisms on $\maclS _s'(\rr {2d})$.
The map $a\mapsto K_{a,A}$ is hence a homeomorphism on
$\maclS _s'(\rr {2d})$.

\par

If $A=0$, then $\op _A(a)$ is the standard or Kohn-Nirenberg representation
$a(x,D)$. If instead $A=\frac 12 I$, then $\op _A(a)$ agrees
with the Weyl operator or Weyl quantization $\op ^w(a)$.

\par

For any $K\in \maclS '_s(\rr {d_1+d_2})$, let $T_K$ be the
linear and continuous mapping from $\maclS _s(\rr {d_1})$
to $\maclS _s'(\rr {d_2})$ defined by
\begin{equation}\label{pre(A.1)}
(T_Kf,g)_{L^2(\rr {d_2})} = (K,g\otimes \overline f )_{L^2(\rr {d_1+d_2})},
\quad f \in \maclS _s(\rr {d_1}), \quad g \in \maclS _s(\rr {d_2}).
\end{equation}
It is a well known consequence of the Schwartz kernel
theorem that if $t\in \mathbf R$, then $K\mapsto T_K$ and $a\mapsto
\op _A(a)$ are bijective mappings from $\mascS '(\rr {2d})$
to the space of linear and continuous mappings from $\mascS (\rr d)$ to
$\mascS '(\rr d)$ (cf. e.{\,}g. \cite{Ho3}).

\par

Likewise the maps $K\mapsto T_K$
and $a\mapsto \op _A(a)$ are uniquely extendable to bijective
mappings from $\maclS _s'(\rr {2d})$ to the set of linear and
continuous mappings from $\maclS _s(\rr d)$ to $\maclS _s'(\rr d)$.
In fact, the asserted bijectivity for the map $K\mapsto T_K$ follows from
the kernel theorem \cite[Theorem 2.3]{LozPer} (cf. \cite[vol. IV]{GS}).
This kernel theorem corresponds to the Schwartz kernel theorem
in the usual distribution theory.
The other assertion follows from the fact that $a\mapsto K_{a,A}$
is a homeomorphism on $\maclS _s'(\rr {2d})$.

\par

In particular, for each $a_1\in \maclS _s '(\rr {2d})$ and $A_1,A_2\in
\GL (d,\mathbf R)$, there is a unique $a_2\in \maclS _s '(\rr {2d})$ such that
$\op _{A_1}(a_1) = \op _{A_2} (a_2)$. The relation between $a_1$ and $a_2$
is given by
\begin{equation}\label{calculitransform}
\op _{A_1}(a_1) = \op _{A_2}(a_2) \quad \Leftrightarrow \quad
a_2(x,\xi )=e^{i\scal {(A_1-A_2)D_\xi }{D_x}}a_1(x,\xi ).
\end{equation}
(Cf. \cite{Ho3}.) Note that the right-hand side makes sense, since
it means $\widehat a_2(\xi ,x)=e^{i\scal {(A_1-A_2)x}{\xi}}\widehat a_1(\xi ,x)$,
and since the map $a(\xi ,x)\mapsto e^{i\scal {Ax}\xi }a(\xi ,x)$ is continuous on
$\maclS _s '(\rr {2d})$.

\par

For future references we observe the relationship
\begin{equation}\label{Eq:STFTLinkKernelSymbol}
\begin{aligned}
|(V_\Phi &K_{a,A})(x,y,\xi ,-\eta )|
\\
&=
|(V_\Psi a)(x-A(x-y),A^*\xi +(I-A^*)\eta ,\xi -\eta ,y-x)|,
\\[1ex]
\Phi (x,y) &= (\mascF _2\Psi )(x-A(x-y),x-y)
\end{aligned}
\end{equation}
between symbols and kernels for pseudo-differential operators,
which follows by straight-forward applications of Fourier inversion
formula (see also the proof of Proposition 2.5 in \cite{Toft15}).
We observe that for the Weyl case, \eqref{Eq:STFTLinkKernelSymbol}
takes the convenient form
\begin{equation}\label{Eq:STFTLinkKernelSymbolWeyl}
\begin{aligned}
|(V_\Phi K_{a}^w)(x,y,\xi ,-\eta )|
&=
\left | (\maclV _\Psi a)\left ({\textstyle{\frac 12}} (Y+X) ,{\textstyle{\frac 12}}(Y-X) \right ) \right |,
\\[1ex]
\Phi (x,y) &= (\mascF _2\Psi )(\textstyle{\frac 12}(x+y),x-y),
\\[1ex]
X&=(x,\xi )\in \rr {2d},\quad Y=(y,\eta )\in \rr {2d}.
\end{aligned}
\end{equation}
when using symplectic short-time Fourier transforms. Here
$K_{a}^w$ is the kernel of $\op ^w(a)$.

\medspace

Next we discuss symbol products in pseudo-differential calculi,
twisted convolution and related
operations (see \cite{Ho3,Folland}). Let $A\in \GL (d,\mathbf R)$,
$s\ge 1/2$ and let $a,b\in
\maclS _s '(\rr {2d})$. The pseudo product with respect to
$A$ or the \emph{$A$-pseudo product} $a\wpr _{\! A}
b$ between $a$
and $b$ is the function or distribution which satisfies
\begin{align*}
\op _A(a\wpr _{\! A} b) &= \op _A(a)\circ \op _A(b),
\intertext{provided the right-hand side
makes sense as a continuous operator from $\maclS _s(\rr d)$ to
$\maclS _s'(\rr d)$. Since the Weyl case is especially important,
we put $a\wpr b =a\wpr _{\! A}b$ when $A=\frac 12 I_d$, where
$I_d$ is the unit matrix of order $d$. That is, we have}
\op ^w(a\wpr b) &= \op ^w(a)\circ \op ^w(b),
\end{align*}
provided the right-hand side makes sense.
%
%
%
%
%

\par

\subsection{The Weyl product and the twisted convolution}

\par

The symplectic Fourier transform ${\mascF _\sigma}$ is continuous on
$\maclS _s (\rr {2d})$ and extends uniquely to a homeomorphism on
$\maclS _s '(\rr {2d})$, and to a unitary map on $L^2(\rr {2d})$, since similar
facts hold for $\mascF$. Furthermore $\mascF _\sigma^{2}$ is the identity
operator.

\par

Let $s\ge 1/2$ and $a,b\in \maclS _s (\rr {2d})$. The \emph{twisted
convolution} of $a$ and $b$ is defined by
\begin{equation}\label{twist1}
(a \ast _\sigma b) (X)
= (2/\pi)^{\frac d2} \int _{\rr {2d}}a(X-Y) b(Y) e^{2 i \sigma(X,Y)}\, dY.
\end{equation}
The definition of $*_\sigma$ extends in different ways. For example
it extends to a continuous multiplication on $L^p(\rr {2d})$ when $p\in
[1,2]$, and to a continuous map from $\maclS _s '(\rr {2d})\times
\maclS _s (\rr {2d})$ to $\maclS _s '(\rr {2d})$. If $a,b \in
\maclS _s '(\rr {2d})$, then $a \wpr b$ makes sense if and only if $a
*_\sigma \widehat b$ makes sense, and
\begin{equation}\label{tvist1}
a \wpr b = (2\pi)^{-\frac d2} a \ast_\sigma (\mascF _\sigma {b}).
\end{equation}
For the twisted convolution we have
\begin{equation}\label{weylfourier1}
\mascF _\sigma (a *_\sigma b) = (\mascF _\sigma a) *_\sigma b =
\check{a} *_\sigma (\mascF _\sigma b),
\end{equation}
where $\check{a}(X)=a(-X)$ (cf. \cite{Toft1}). A
combination of \eqref{tvist1} and \eqref{weylfourier1} gives
\begin{equation}\label{weyltwist2}
\mascF _\sigma (a\wpr b) = (2\pi )^{-\frac d2}(\mascF _\sigma
a)*_\sigma (\mascF _\sigma b).
\end{equation}
If $\widetilde a(X) = \overline{a(-X)}$ then
\begin{equation*}
(a_1*_\sigma a_2,b) = (a_1,b*_\sigma \widetilde a_2)=(a_2,\widetilde
a_1*_\sigma b),\quad (a_1*_\sigma a_2)*_\sigma b = a_1*_\sigma
(a_2*_\sigma b),
\end{equation*}
for appropriate $a_1, a_2,b$, and furthermore (cf. \cite{HTW})
\begin{equation}\label{duality0}
(a_1 \wpr a_2, b) = (a_2, \overline{a_1} \wpr b) = (a_1, b \wpr
\overline{a_2}).
\end{equation}

\par

We have the following result for the map $e^{it\scal {AD_\xi}{D_x}}$ in
\eqref{calculitransform} when the domains are modulation spaces. We refer
to \cite[Proposition 1.7]{Toft5} for the proof (see also
\cite[Proposition 6.14]{Toft8}).

\par

\begin{prop}\label{propCalculiTransfMod}
Let $\omega _0\in \mascP _E(\rr
{4d})$, $p,q\in [1,\infty ]$, $A_1,A_2\in \GL (d,\mathbf R)$, and set
$$
\omega _A(x,\xi ,\eta ,y)= \omega _0(x-Ay,\xi -A^*\eta ,\eta ,y).
$$
The map $e^{i\scal {AD_\xi}{D_x}}$ on $\maclS _{1/2}'(\rr {2d})$
restricts to a homeomorphism from $M^{p,q}_{(\omega
_0)}(\rr {2d})$ to $M^{p,q}_{(\omega _A)}(\rr {2d})$.

\par

In particular, if $a_1,a_2\in \maclS _{1/2}'(\rr {2d})$ satisfy
\eqref{calculitransform}, then $a_1\in
M^{p,q}_{(\omega _{A_1})}(\rr {2d})$, if and only if $a_2\in
M^{p,q}_{(\omega _{A_2})}(\rr {2d})$.
\end{prop}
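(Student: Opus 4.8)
\textbf{Proof plan for Proposition \ref{propCalculiTransfMod}.}

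The plan is to reduce the statement to the known boundedness/homeomorphism behaviour of $e^{i\scal{AD_\xi}{D_x}}$ on Gelfand--Shilov distributions together with a pointwise identity for the short-time Fourier transform under this operator, and then to read off the weight that is generated. First I would observe that $e^{i\scal{AD_\xi}{D_x}}$ is already known to be a homeomorphism on $\maclS_{1/2}'(\rr{2d})$ (it is the calculi transform appearing in \eqref{calculitransform}, and on the Fourier side it is multiplication by the unimodular factor $e^{i\scal{Ax}{\xi}}$, which is continuous on $\maclS_{1/2}'$). So the only issue is to track what happens to the modulation-space norm. The key computational step is a formula comparing $V_\Phi(e^{i\scal{AD_\xi}{D_x}}a)$ with $V_\Psi a$ for a suitably transformed window $\Psi$; this is exactly the type of identity recorded in \eqref{Eq:STFTLinkKernelSymbol}, where the substitution $(x,\xi)\mapsto(x-A(x-y),A^*\xi+(I-A^*)\eta)$ in the kernel--symbol correspondence already encodes the change of variables induced by $A$. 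I would isolate the change of variables relevant here, namely the affine map $(x,\xi,\eta,y)\mapsto(x-Ay,\xi-A^*\eta,\eta,y)$ on phase space, and show that
\begin{equation*}
|V_\Phi(e^{i\scal{AD_\xi}{D_x}}a)(x,\xi,\eta,y)| = |V_\Psi a(x-Ay,\xi-A^*\eta,\eta,y)|
\end{equation*}
for a fixed window $\Psi\in\maclS_{1/2}(\rr{2d})\setminus 0$ depending only on $\Phi$ and $A$ (concretely $\Phi$ and $\Psi$ are related by the same exponential operator, or by the second map in \eqref{homeoF2tmap}).

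Granting that identity, the proof is essentially a substitution in the defining integral \eqref{modnorm1}. Writing $a_A = e^{i\scal{AD_\xi}{D_x}}a$, I would compute $\nm{a_A}{M^{p,q}_{(\omega_A)}}$ by inserting the STFT identity and then performing the linear change of variables on the phase-space integration. Because that change of variables has determinant one in the relevant blocks (it is a shear in $x$ by $-Ay$ and in $\xi$ by $-A^*\eta$, leaving $\eta,y$ fixed), the Lebesgue measure is preserved, the iterated $L^q_\eta L^p_{x}$ structure of the norm is respected (the substitution in $x$ is for fixed $\eta,y$, and the substitution in $\xi$ mixes only with $\eta$ which is the outer variable — one has to check the order of integration is compatible, but it is, since the $x$-integration is innermost and untouched except by a translation), and the weight $\omega_A(x,\xi,\eta,y)=\omega_0(x-Ay,\xi-A^*\eta,\eta,y)$ becomes exactly $\omega_0$ evaluated at the new variables. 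This yields $\nm{a_A}{M^{p,q}_{(\omega_A)}} \asymp \nm{a}{M^{p,q}_{(\omega_0)}}$, with constants independent of $a$; the moderateness hypothesis $\omega_0\in\mascP_E(\rr{4d})$ guarantees first that $\omega_A$ is again in $\mascP_E(\rr{4d})$ (shears of moderate weights are moderate, with control of the submultiplicative majorant), so that $M^{p,q}_{(\omega_A)}$ is a well-defined Banach space in the sense of Proposition \ref{p1.4}~(1), and second that the different-window equivalence of norms in Proposition \ref{p1.4}~(1) legitimises the use of the particular windows $\Phi,\Psi$ that arise.

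For the ``in particular'' statement, I would simply apply the homeomorphism assertion twice. If $a_1,a_2$ satisfy \eqref{calculitransform}, then $a_2 = e^{i\scal{(A_1-A_2)D_\xi}{D_x}}a_1$, so with $A=A_1-A_2$ the first part gives $a_1\in M^{p,q}_{(\omega_{A_1})}$ iff $a_2\in M^{p,q}_{(\omega_{A_1-A_2}\text{ transported})}$; a short bookkeeping check, using $\omega_{A_1}(x,\xi,\eta,y)=\omega_0(x-A_1y,\xi-A_1^*\eta,\eta,y)$ and composing the two shears, identifies the resulting weight with $\omega_{A_2}$, giving the stated equivalence. The main obstacle I anticipate is not conceptual but notational: getting the STFT-transformation identity stated with the correct signs and the correct transformed window, and then verifying carefully that the affine substitution is compatible with the \emph{order} of the iterated integrals defining the mixed-norm $M^{p,q}$ space (as opposed to the symmetric $M^p$ case where this is automatic). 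Once the bookkeeping in \eqref{Eq:STFTLinkKernelSymbol} is adapted to the present operator, the rest is a measure-preserving change of variables. Since this is precisely the content of \cite[Proposition 1.7]{Toft5} (see also \cite[Proposition 6.14]{Toft8}), I would at this point simply cite those sources for the details and move on.
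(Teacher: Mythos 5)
The paper itself gives no proof of Proposition \ref{propCalculiTransfMod}: it simply refers to \cite[Proposition 1.7]{Toft5} (see also \cite[Proposition 6.14]{Toft8}), which is exactly where your argument also ends up. Your sketch — covariance of the STFT under $e^{i\scal {AD_\xi}{D_x}}$ with a correspondingly transformed window, followed by a measure-preserving shear that translates the inner variables only by amounts depending on the outer ones, so the mixed $L^{p,q}$ structure and the transported weight $\omega _A$ match up — is the standard argument behind those cited results, so your proposal is correct and takes essentially the same route as the paper.
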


\par

(Note that
in the equality of (2) in \cite[Proposition 6.14]{Toft8},
$y$ and $\eta$ should be interchanged in the last two arguments
in $\omega _0$.)

\par

\section{Continuity for the Weyl product on
modulation spaces}\label{sec2}

\par

In this section we deduce results on sufficient conditions for
continuity of the Weyl product on
modulation spaces, and the twisted convolution on Wiener amalgam spaces. 
The main results are Theorem
\ref{Thm:MainThmOdd} and \ref{Thm:MainThmOdd2}
concerning the Weyl product and more general products in
pseudo-differential calculus,
and Theorem \ref{Thm:MainThmOddTwistConv} concerning the twisted
convolution.

\par

When proving Theorem \ref{Thm:MainThmOdd} we first need norm estimates. 
Then we prove the uniqueness of the extension, where generally norm
approximation not suffices, since the test function space may fail
to be dense in several of the domain spaces. The situation is
saved by a comprehensive argument based on narrow convergence.
First we prove the important
special cases Propositions \ref{Prop:Prop1} and \ref{Prop:Prop2} and then
we deduce Theorem \ref{Thm:MainThmOdd}.

\par

For $N \ge 2$ we let $\masfR _N$ be the
function on $[0,1]^{N+1}$, given by
\begin{equation}\label{Eq:HYfunctional}
\begin{aligned}
\masfR _N(x) &= ({N-1})^{-1}\left ({\sum _{j=0}^N
x_j-1}\right ),
\\[1ex]
x &= (x_0,x_1,\dots ,x_N)\in [0,1]^{N+1}, 
\end{aligned}
\end{equation}
and we consider mappings of the form
\begin{align}
(a_1,\dots ,a_N)
&\mapsto
a_1\wpr \cdots \wpr a_N,
\label{Eq:Weylmap}
\intertext{or, more generally, mappings of the form}
(a_1,\dots ,a_N)
&\mapsto
a_1\wpr _{\! A}\cdots \wpr _{\! A} a_N,
\tag*{(\ref{Eq:Weylmap})$'$}
\end{align}
We observe that
\begin{equation}\label{Eq:HYfunctionalConj}
\masfR _N({\textstyle{\frac 1p}}) + \masfR _N({\textstyle{\frac 1{p'}}}) = 1.
\end{equation}

\par

We first show a formula for the STFT
of $a_1\wpr \cdots \wpr a_N$ expressed with
\begin{equation}\label{Fjdef}
F_j(X,Y) = \maclV_{\Phi _j}a_j (X+Y,X-Y).
\end{equation}

\par

The following lemma is a restatement of \cite[Lemma 2.3]{CoToWa}.
The proof is therefore omitted.

\par

\begin{lemma}\label{prodlemma}
Let $\Phi _j \in \maclS _{1/2}(\rr {2d})$, $j=1,\dots ,N$, $a_k \in
\maclS _{1/2}'(\rr {2d})$ for some $1 \le k \le N$, and $a_j \in
\maclS _{1/2}(\rr {2d})$ for $j \in \{1,\dots ,N\} \setminus k$. 
Suppose
$$
\Phi _0 = \pi ^{(N-1)d}\Phi _1\wpr \cdots \wpr \Phi _N\quad
\text{and}\quad
a_0 = a_1\wpr \cdots \wpr a_N.
$$
If $F_j$ are given by \eqref{Fjdef} then
\begin{multline}\label{STFTintegral}
F_0(X_N,X_0)
\\[1ex]
=\idotsint _{\rr {2(N-1)d}}e^{2 i Q(X_0,\dots  ,X_N)}
\prod _{j=1}^NF_j(X_j,X_{j-1}) \, dX_1
\cdots dX_{N-1}
\end{multline}
with 
$$
Q(X_0,\dots,X_N)=\sum_{j=1}^{N-1}\sigma(X_j-X_0,X_{j+1}-X_0).
$$ 
\end{lemma}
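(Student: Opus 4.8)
The plan is to establish the STFT formula \eqref{STFTintegral} by induction on $N$, reducing everything to the bilinear case $N=2$, which is essentially a direct computation using the definition of the symplectic STFT and the twisted-convolution representation \eqref{weyltwist2} of the Weyl product together with the relation \eqref{tvist1}. First I would treat the case $N=2$: writing $a_0=a_1\wpr a_2$, one has $\maclV_{\Phi_0}a_0(X_2,X_0)$ with $\Phi_0=\pi^d\Phi_1\wpr\Phi_2$, and one computes this directly. The key analytic tool is the covariance of the symplectic STFT under twisted convolution: because $a_1\wpr a_2$ is (up to a symplectic Fourier transform on the second factor and a constant) a twisted convolution, and the twisted convolution interacts with the Gaussian-type phase in a controlled way, the STFT of the product factors into an integral over one intermediate variable $X_1\in\rr{2d}$ of the product $F_1(X_1,X_0)F_2(X_2,X_1)$ against the exponential $e^{2iQ(X_0,X_1,X_2)}$ with $Q(X_0,X_1,X_2)=\sigma(X_1-X_0,X_2-X_0)$, matching the claimed formula for $N=2$. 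The change of variables $X\mapsto(X+Y,X-Y)$ built into \eqref{Fjdef} is precisely what converts the quadratic phase of the twisted convolution into the symplectic form appearing in $Q$.

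For the inductive step, assuming the formula for products of $N-1$ factors, I would write $a_0 = (a_1\wpr\cdots\wpr a_{N-1})\wpr a_N$ and $\Phi_0 = \pi^{(N-1)d}(\Phi_1\wpr\cdots\wpr\Phi_{N-1})\wpr\Phi_N$, apply the $N=2$ formula to split off the last factor using the intermediate variable $X_{N-1}$, and then apply the induction hypothesis to $\maclV_{\Phi_1\wpr\cdots\wpr\Phi_{N-1}}(a_1\wpr\cdots\wpr a_{N-1})$, which introduces the remaining intermediate variables $X_1,\dots,X_{N-2}$ and the phase $\sum_{j=1}^{N-2}\sigma(X_j-X_0,X_{j+1}-X_0)$. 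It then remains to check that the two phases assemble correctly: the bilinear step contributes $\sigma(X_{N-1}-X_0,X_N-X_0)$ (this requires identifying the "base point" $X_0$ of the inner product correctly, which is where a short bookkeeping computation is needed), and adding this to the inductive phase reproduces $Q(X_0,\dots,X_N)=\sum_{j=1}^{N-1}\sigma(X_j-X_0,X_{j+1}-X_0)$ exactly. The normalization constants $\pi^{(N-1)d}$ and the $(2/\pi)^{d/2}$ factors from \eqref{twist1} must also be tracked, but these are routine.

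The hypotheses on $a_j$ — that all but one lie in $\maclS_{1/2}$ and the remaining one in $\maclS_{1/2}'$ — guarantee that every Weyl product appearing along the way is well-defined as a continuous operator from $\maclS_{1/2}$ to $\maclS_{1/2}'$, so that all the STFTs and the oscillatory integrals make sense (as Gelfand--Shilov distributions paired against the rapidly decaying windows $\Phi_j$); one should note that the association of the $\maclS_{1/2}'$ factor with the "$k$-th slot" is preserved when one factors $a_0$ as above, so the induction hypothesis applies. The main obstacle I anticipate is not any single hard estimate but rather the careful verification that the quadratic phases combine exactly — in particular getting all the signs in the symplectic forms and the roles of $X_0$ versus $X_N$ as base points right, since a sign error there would be invisible in the $N=2$ case but would propagate. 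Since this is a restatement of \cite[Lemma 2.3]{CoToWa}, the cleanest route is simply to invoke that reference, and indeed the excerpt does exactly this and omits the proof.
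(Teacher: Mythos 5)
Your proposal is consistent with what the paper actually does: the paper gives no proof at all, invoking \cite[Lemma 2.3]{CoToWa}, which is precisely the fallback you identify in your last sentence, and your induction sketch is a faithful reconstruction of how that cited lemma is obtained (bilinear case from the twisted-convolution representation, then induction). I checked the one step you flagged: grouping $a_0=(a_1\wpr \cdots \wpr a_{N-1})\wpr a_N$ with window $\Psi =\pi ^{(N-2)d}\Phi _1\wpr \cdots \wpr \Phi _{N-1}$ gives $\Phi _0=\pi ^d\Psi \wpr \Phi _N$, the bilinear step contributes $\sigma (X_{N-1}-X_0,X_N-X_0)$, and adding this to the inductive phase $\sum _{j=1}^{N-2}\sigma (X_j-X_0,X_{j+1}-X_0)$ yields exactly $Q(X_0,\dots ,X_N)$, so the bookkeeping closes as you anticipated.
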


\par

Next we use the previous lemma to find sufficient conditions
for the extension of \eqref{Eq:Weylmap} to modulation spaces.
The integral representation of $V_{\Phi _0}a_0$ in the previous
lemma leads to the weight condition
\begin{multline}\label{Eq:WeightCond}
1 \lesssim \omega _0(X_N+X_0,X_N-X_0)\prod _{j=1}^N
\omega _j(X_j+X_{j-1},X_j-X_{j-1}),
\\[1ex]
X_0,X_1,\dots ,X_N\in \rr {2d}.
\end{multline}

\par

The following result
is a restatement of \cite[Proposition 2.2]{CoToWa}. The proof is
therefore omitted.

\par

\begin{prop}\label{Prop:Prop1}
Let $p_j,q_j\in [1,\infty ]$, $j=0,1,\dots , N$, and suppose
$$
\masfR _N({\textstyle{\frac 1{q'}}})\le 0\le \masfR _N({\textstyle{\frac 1p}}).
$$
Let $\omega _j$, $j=0,1,\dots ,N$, and suppose
\eqref{Eq:WeightCond} holds. Then the map \eqref{Eq:Weylmap}
from $\maclS _{1/2}(\rr {2d}) \times \cdots \times
\maclS _{1/2}(\rr {2d})$ to $\maclS _{1/2}(\rr {2d})$
extends uniquely to a continuous and
associative map from $\splM ^{p_1,q_1}_{(\omega _1)}(\rr {2d})
\times \cdots \times
\splM ^{p_N,q_N}_{(\omega _N)}(\rr {2d})$ to $\splM ^{p_0',q_0'}
_{(1/\omega _0)}(\rr {2d})$.
\end{prop}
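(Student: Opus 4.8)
The plan is to first establish, for test functions, the a priori norm estimate
\begin{equation*}
\nm {a_1\wpr \cdots \wpr a_N}{\splM ^{p_0',q_0'}_{(1/\omega _0)}}
\lesssim
\prod _{j=1}^N\nm {a_j}{\splM ^{p_j,q_j}_{(\omega _j)}},
\qquad
a_1,\dots ,a_N\in \maclS _{1/2}(\rr {2d}),
\end{equation*}
and then to obtain the stated continuous map on modulation spaces by an approximation argument, associativity being inherited from the associativity of operator composition at the test-function level and propagated by continuity. Since the hypothesis $\masfR _N({\textstyle \frac 1{q'}})\le 0\le \masfR _N({\textstyle \frac 1p})$ is precisely the regime in which the oscillation in the product formula may be discarded, the norm estimate will follow from crude absolute-value bounds. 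Concretely, I would fix windows $\Phi _1,\dots ,\Phi _N\in \maclS _{1/2}(\rr {2d})\setminus 0$ with $\Phi _0=\pi ^{(N-1)d}\Phi _1\wpr \cdots \wpr \Phi _N\neq 0$; then $\Phi _0\in \maclS _{1/2}(\rr {2d})$ is an admissible window for the $\splM ^{p_0',q_0'}_{(1/\omega _0)}$-norm by Proposition \ref{p1.4}~(1) and \eqref{stftcompare}. With $F_j$ as in \eqref{Fjdef}, Lemma \ref{prodlemma} gives the representation \eqref{STFTintegral}, and since the phase factor there is unimodular, $|F_0(X_N,X_0)|\le \idotsint \prod _{j=1}^N|F_j(X_j,X_{j-1})|\, dX_1\cdots dX_{N-1}$. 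Multiplying through by $1/\omega _0(X_N+X_0,X_N-X_0)$ and using \eqref{Eq:WeightCond} in the form $1/\omega _0(X_N+X_0,X_N-X_0)\lesssim \prod _{j=1}^N\omega _j(X_j+X_{j-1},X_j-X_{j-1})$, which holds pointwise in the integration variables, the weighted estimate reduces to the weight-free multilinear inequality
\begin{equation*}
\Bigl \| \idotsint _{\rr {2(N-1)d}}\prod _{j=1}^NG_j(X_j,X_{j-1})\, dX_1\cdots dX_{N-1}\Bigr \| _{(0)}
\lesssim
\prod _{j=1}^N\| G_j\| _{(j)},
\end{equation*}
valid for all nonnegative measurable $G_j$ on $\rr {2d}\times \rr {2d}$.

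Here, for $j\ge 1$, $\| G_j\| _{(j)}$ denotes the $L^{p_j}(L^{q_j})$-mixed norm of $G_j(X,Y)$ taken in the rotated variables $(X+Y,X-Y)$, and $\| \cdot \| _{(0)}$ the analogous $L^{p_0'}(L^{q_0'})$-mixed norm of the left-hand side in $(X_N+X_0,X_N-X_0)$. I would prove this inequality by splitting each $X_j$ into its position and frequency components in $\rr d$, carrying out the evident linear changes of variables, and then applying the appropriate combination of Young's inequality for convolutions, H\"older's inequality, and Minkowski's integral inequality; the two hypotheses $\masfR _N({\textstyle \frac 1p})\ge 0$ and $\masfR _N({\textstyle \frac 1{q'}})\le 0$ are exactly the two exponent constraints this bookkeeping requires. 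This is the multilinear counterpart of the bilinear Weyl-product estimate in \cite{HTW}; see also \cite[Proposition 2.2]{CoToWa}.

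It then remains to turn the norm estimate into a unique continuous associative extension. When all $p_j,q_j<\infty$, Proposition \ref{p1.4}~(4) gives norm density of $\maclS _{1/2}(\rr {2d})$ in every domain space, so the extension, its uniqueness and its associativity follow at once. The subtle case --- and the one I expect to be the main obstacle --- is that some exponents may equal $\infty$ (both $\splM ^{\infty ,q}$-type domains and $\splM ^{p,\infty}$-type codomains occur here), so that $\maclS _{1/2}$ is not norm dense. I would then approximate each $a_j$ by test functions in the strongest sense available: norm density when $p_j,q_j<\infty$; narrow density (Proposition \ref{narrowprop}, after passing via \eqref{twistfourmod} and the symplectic Fourier transform to spaces of the form $\splM ^{\infty ,q}$ with $q<\infty$, or their Wiener-amalgam analogues) for the remaining spaces; and the weak-$*$ density of Proposition \ref{p1.4}~(4) where even narrow density is unavailable. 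In each case the a priori bound controls all the relevant limits uniformly, so the $N$-fold products of the approximants converge to a limit independent of the chosen sequences, which is the desired extension, and associativity passes to the limit. Running this argument simultaneously through all $N$ arguments, and keeping track of which density notion serves which factor, is where the real care is needed; the core mixed-norm inequality, though it calls for attentive exponent accounting, uses only the classical inequalities of Young, H\"older and Minkowski.
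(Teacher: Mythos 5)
The paper itself does not prove this proposition: it is quoted from \cite[Proposition~2.2]{CoToWa} with the proof omitted, and your overall strategy --- the representation of $\maclV _{\Phi _0}(a_1\wpr \cdots \wpr a_N)$ from Lemma \ref{prodlemma}, absolute values combined with the weight condition \eqref{Eq:WeightCond}, a multilinear mixed-norm estimate in the rotated variables, and a density/narrow-convergence argument for the unique associative extension --- is indeed the strategy of that cited proof.

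There is, however, one step that fails as written: the reduction to a weight-free inequality ``valid for all nonnegative measurable $G_j$''. That inequality is false whenever either hypothesis $\sum _{j=0}^N 1/p_j\ge 1$ or $\sum _{j=0}^N 1/q_j'\le 1$ is strict. Indeed, replacing every $G_j(X,Y)$ by $G_j(\lambda X,\lambda Y)$ scales the left-hand side by $\lambda ^{-2d((N-1)+1/p_0'+1/q_0')}$ and the right-hand side by $\lambda ^{-2d\sum _{j=1}^N(1/p_j+1/q_j)}$, so a uniform constant forces $(N+1)-\sum _{j=0}^N\frac 1{p_j}-\sum _{j=0}^N\frac 1{q_j}=0$, i.e. $\masfR _N({\textstyle{\frac 1p}})=\masfR _N({\textstyle{\frac 1{q'}}})=0$; under strict inequality, letting $\lambda \to \infty$ would force the left-hand side to vanish identically. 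The missing idea is a preliminary reduction to this critical case: by the inclusions $\splM ^{p,q}_{(\omega )}\subseteq \splM ^{\tilde p,\tilde q}_{(\omega )}$ for $p\le \tilde p$, $q\le \tilde q$ (Proposition \ref{p1.4}\,(2)) one may enlarge all domain spaces and shrink the target, i.e. increase every $p_j$ and $q_j$, $j=0,\dots ,N$, until $\sum 1/p_j=1$ and $\sum 1/q_j'=1$ hold exactly, and only then pass to absolute values. Even at the critical exponents the resulting mixed-norm inequality is not a one-line consequence of Young, H{\"o}lder and Minkowski, since the inner and outer variables of consecutive factors are coupled through the rotation $(X,Y)\mapsto (X+Y,X-Y)$ and the integrations must be performed in a particular order; but that is the bookkeeping you explicitly defer, and it is carried out in \cite{HTW} and \cite[Proposition~2.1]{CoToWa}. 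The remainder of your plan, in particular the use of narrow convergence and weak density to handle the endpoint exponents, matches the cited argument.
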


\par

The associativity means that for any
product \eqref{Eq:Weylmap}, where the factors $a_j$ satisfy
the hypotheses, the subproduct
$$
a_{k_1}\wpr a_{k_1+1} \wpr  \cdots \wpr a_{k_2}
$$
is well defined as a distribution for any $1\le k_1 \le k_2\le N$, and
$$
a_1\wpr \cdots \wpr a_N = (a_1\wpr \cdots \wpr a_k)
\wpr (a_{k+1}\wpr \cdots \wpr a_N),
$$
for any $1\le k\le N-1$.
 
\par

For appropriate weights $\omega$ the space
$\splM ^2_{(\omega )}(\rr {2d})$ consists of symbols of Hilbert--Schmidt
operators acting between certain modulation spaces (cf. \cite{Toft5,Toft9}). 

\par

The next result is an extension of this fact.

\par

\begin{prop}\label{Prop:Prop2}
Let $N\ge 3$ be odd,
$p,p_j\in (0,\infty ]$, $j=1,\dots , N$,
and let
$\omega _j \in \mascP _E(\rr {4d})$, $j=0,1,\dots ,N$, and
suppose \eqref{Eq:WeightCond} holds. 
Then the following is true:
\begin{enumerate}
\item[{\rm{(1)}}] if $p_0=p_N=p$, $p_j=\max (1,p)$ when $j\in [3,N-2]$ is odd
and $p_j=p'$ when $j$ is even, then the map \eqref{Eq:Weylmap} 
from $\maclS _{1/2}(\rr {2d}) \times \cdots \times
\maclS _{1/2}(\rr {2d})$ to $\maclS _{1/2}(\rr {2d})$
extends uniquely to a continuous and associative map from
$\splM ^{p_1} _{(\omega _1)}(\rr {2d}) \times \cdots
\times \splM ^{p_N} _{(\omega _N)}(\rr {2d})$ to
$\splM ^{p} _{(1/\omega _0)}(\rr {2d})$;

\par

\item[{\rm{(1)}}] if $p_j=p$ when $j$ is even and
$p_j=p'$ when is odd,
then the map \eqref{Eq:Weylmap} 
from $\maclS _{1/2}(\rr {2d}) \times \cdots \times
\maclS _{1/2}(\rr {2d})$ to $\maclS _{1/2}(\rr {2d})$
extends uniquely to a continuous and associative map from
$\splM ^{p_1} _{(\omega _1)}(\rr {2d}) \times \cdots
\times \splM ^{p_N} _{(\omega _N)}(\rr {2d})$ to
$\splM ^{p'} _{(1/\omega _0)}(\rr {2d})$.
\end{enumerate}
\end{prop}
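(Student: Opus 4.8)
The plan is to reduce both parts of Proposition~\ref{Prop:Prop2} to the Hilbert--Schmidt type estimate underlying \eqref{Eq:NonWeightModEmb}$''$, combined with the associativity and the special-case Proposition~\ref{Prop:Prop1}, and then to upgrade the resulting norm estimates to a genuine continuous extension by means of narrow convergence. The starting observation is the trilinear building block: for appropriate weights the map $(a_1,a_2,a_3)\mapsto a_1\wpr a_2\wpr a_3$ extends continuously from $\splM^{p}_{(\omega_1)}\times \splM^{p'}_{(\omega_2)}\times \splM^{p}_{(\omega_3)}$ to $\splM^{p}_{(1/\omega_0)}$ under \eqref{Eq:WeightCond} with $N=3$. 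This is the weighted analogue of $M^{p,p}\wpr M^{p',p'}\wpr M^{p,p}\subseteq M^{p,p}$, and it follows by applying Lemma~\ref{prodlemma} with $N=3$: the STFT $F_0$ is the $\sigma$-twisted integral of $F_1F_2F_3$ against $e^{2iQ}$, and estimating $F_0$ in the mixed norm corresponding to $\splM^{p}$ via H\"older (pairing the $L^p$ factors $F_1,F_3$ against the $L^{p'}$ factor $F_2$, over the integration variables $X_1,X_2$) gives exactly the claimed bound, with the weights absorbed using \eqref{Eq:WeightCond}. Formally this is the case $N=3$ of the estimate that yields \eqref{Eq:NonWeightModEmb}$''$; I would either cite it from \cite{CoToWa} or reprove it in one line from Lemma~\ref{prodlemma}.

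Next I would iterate. For part (1), write $N=2m+1$ and group the product as
$$
a_1\wpr\cdots\wpr a_N=(a_1\wpr a_2\wpr a_3)\wpr a_4\wpr\cdots\wpr a_N,
$$
and observe that the first parenthesis lands in a space of the form $\splM^{p}_{(1/\varpi)}$ with $\varpi$ controlled by $\omega_1,\omega_2,\omega_3$ through the $N=3$ version of \eqref{Eq:WeightCond}; absorbing $1/\varpi$ into the remaining chain by redefining the weight on the new first factor, one is left with an $(N-2)$-linear product of the same structural type ($\splM^{p}$, $\splM^{p'}$, $\splM^{p}$, $\dots$), to which induction on $m$ applies. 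The base case $N=3$ is the building block above, and the case $j\in[3,N-2]$ odd with exponent $\max(1,p)$ rather than $p$ appears because after the first fusion the relevant interior factor is paired on the distributional side and one only needs $\splM^{\max(1,p)}\subseteq \splM^{p'}$-type containment from Proposition~\ref{p1.4}(2) when $p<1$; for $p\ge 1$ this is just $p$. The weighted bookkeeping is routine: each fusion step produces a new weight satisfying the $(N-2)$-factor version of \eqref{Eq:WeightCond}, which is what the induction hypothesis consumes.

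For part (2), the pattern $\splM^{p'},\splM^{p},\splM^{p'},\dots$ (odd factors on $\splM^{p'}$, even on $\splM^{p}$) with target $\splM^{p'}$ is the dual statement to part (1): applying \eqref{duality0} and Proposition~\ref{p1.4}(3)--(4), the $(N)$-linear map into $\splM^{p}_{(1/\omega_0)}$ from part~(1) transposes to an $N$-linear map into $\splM^{p'}_{(1/\omega_0)}$ with the roles of $p$ and $p'$ interchanged on each factor and the weight $\omega_0$ replaced appropriately; since $N$ is odd the cyclic structure of the pattern is preserved under this transposition. Alternatively one runs the same grouping argument starting from $(a_1\wpr a_2\wpr a_3)$ with exponents $p',p,p'$, whose fusion lands in $\splM^{p'}$ by the dual building block, and then induct. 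Either way the combinatorics of "odd vs. even index" is exactly what makes the chain consistent, and is the reason the hypothesis $N$ odd cannot be dropped.

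The main obstacle is \emph{uniqueness of the extension}, not the norm estimates. Because several of the domain spaces $\splM^{p_j}_{(\omega_j)}$ have $p_j\in\{\infty\}$ possible (when $p'=\infty$, i.e. $p=1$, or when one reads $p_j$ as $\infty$ for $p=0$), $\maclS_{1/2}$ need not be norm dense in them, so the continuous extension from test functions is not automatically unique. Here I would invoke the narrow-convergence machinery of Definition~\ref{p2.1} and Proposition~\ref{narrowprop} exactly as in the proof of Theorem~\ref{Thm:MainThmOdd}: approximate each $a_j$ by $\maclS_{1/2}$ functions converging narrowly in $\splM^{p_j}_{(\omega_j)}$, show that the multilinear product is continuous along narrow convergence (which follows from dominated convergence applied to the integral representation in Lemma~\ref{prodlemma}, using the dominating functions $g_j$ from Definition~\ref{p2.1}(2)), and conclude that the value of $a_1\wpr\cdots\wpr a_N$ is independent of the approximating sequences. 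The associativity statement then comes for free, since each regrouping is an identity on $\maclS_{1/2}$ and both sides are narrowly continuous. This narrow-convergence step is the technical heart; everything else is H\"older's inequality, Proposition~\ref{p1.4}(2), and induction.
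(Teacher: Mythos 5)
Your overall plan (trilinear building block, induction by fusing consecutive factors, duality for part (2), narrow convergence for uniqueness) is a genuinely different route from the paper, which instead passes to kernels via \eqref{Eq:STFTLinkKernelSymbolWeyl} and proves the whole estimate in one step by writing $K_1\circ\cdots\circ K_N$ as an outer tensor $K_1\otimes K_N$ paired in $L^2$ against an inner pairing of the even-indexed factors with the odd interior ones, so that only tensor-product and $\splM^{r}$--$\splM^{r'}$ duality estimates are needed. Your route could in principle be made to work for $p\ge 1$, but as written it has two genuine gaps, and both occur exactly in the range the proposition is designed to cover, namely $p\in(0,\infty]$ with the endpoints $p\le 1$ and $p=\infty$. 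First, for part (1) with $p<1$ your first fusion $(a_1\wpr a_2\wpr a_3)$ has exponents $(p,p',\max(1,p))=(p,\infty,1)$, and the required trilinear bound $\splM^{p}\wpr\splM^{\infty}\wpr\splM^{1}\subseteq\splM^{p}$ is false: taking rank-one kernels $K_1=f_1\otimes\phi$, $K_2=\phi\otimes\phi$, $K_3=\phi\otimes g$ with $g$ a sum of $n$ well-separated time-frequency shifts of $\phi$ gives $\|K_1\circ K_2\circ K_3\|_{M^p}\asymp\|f_1\|_{M^p}\,n^{1/p}$ while the right-hand side is $\asymp\|f_1\|_{M^p}\,n$, which diverges for $p<1$. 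Your remark that one "only needs" the embedding $\splM^{\max(1,p)}\subseteq\splM^{p'}$ does not repair this, since the embedding goes the wrong way for feeding the third factor into an $\splM^p$ slot; a correct induction must instead fuse an \emph{interior} triple of type $(p',\max(1,p),p')\to p'$ (a part-(2)-type block) and keep the two outer $\splM^p$ factors untouched until the final step $(p,p',p)\to p$. Relatedly, your one-line H\"older proof of the building block from Lemma \ref{prodlemma} only works for $p\in[1,\infty]$; for $p<1$ the H\"older pairing degenerates to an $L^1$-in-the-inner-variables bound that $\|F_j\|_{L^p}$ does not control for general functions, and one needs either the amalgam/reproducing-formula machinery for STFTs or the paper's kernel-duality argument.

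Second, the uniqueness step via narrow convergence does not go through. Proposition \ref{narrowprop} provides narrow density of $\maclS_{1/2}$ only in $\splM^{\infty,q}_{(\omega)}$ with $q<\infty$, whereas the factors for which norm density fails here are of the form $\splM^{\infty}=\splM^{\infty,\infty}$ (the $\splM^{p'}$ factors when $p\le 1$, or the outer factors when $p=\infty$), where test functions are not even narrowly dense. The paper's proof resolves this differently: because the exponent pattern alternates, every factor with exponent $\infty$ sits in a slot where the composition map is separately continuous as a map with that argument in $\maclS_{1/2}'(\rr{2d})$ (the regularization structure $\maclS_{1/2}\times\maclS_{1/2}'\times\maclS_{1/2}\times\cdots$, respectively its dual pattern), so uniqueness follows from approximation of those factors merely in $\maclS_{1/2}'$, and the norm estimate then only has to be verified on test functions. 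You would need to replace your narrow-convergence argument by this (or an equivalent weak-approximation) mechanism; as stated, the uniqueness claim is unsupported precisely in the cases where it is nontrivial.
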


\par

Proposition \ref{Prop:Prop2} follows by combining
\eqref{Eq:STFTLinkKernelSymbolWeyl} with the following
result for kernel operators. The details are left for the reader.

\par

\begin{prop}\label{Prop:Prop2Kernels}
Let $N\ge 3$ be odd,
$p,p_j\in (0,\infty ]$, $j=1,\dots , N$,
and let $\omega _j \in \mascP _E(\rr {4d})$,
$j=0,1,\dots ,N$, be such that
\begin{equation}\label{Eq:KernelAlgWeightCond}
\inf _{x_j,\xi _j\in \rr d}
\left (
\omega _0(x_0,x_N,\xi _0,-\xi _N) \prod _{j=1}^N
\omega _j(x_{j-1},x_j,\xi _{j-1},-\xi _j)
\right ) >0.
\end{equation}
Then the following is true:
\begin{enumerate}
\item[{\rm{(1)}}] if $p_0=p_N=p$, $p_j=\max (1,p)$ when $j\in [3,N-2]$ is odd
and $p_j=p'$ when $j$ is even, then
\begin{equation}\label{Eq:MultLinKernelMap}
(K_1,K_2,\dots ,K_N) \mapsto K_1\circ K_2\circ \cdots \circ K_N
\end{equation}
from $\maclS _{1/2}(\rr {2d}) \times \cdots \times
\maclS _{1/2}(\rr {2d})$ to $\maclS _{1/2}(\rr {2d})$
extends uniquely to a continuous and associative map from
$M ^{p_1} _{(\omega _1)}(\rr {2d}) \times \cdots
\times M ^{p_N} _{(\omega _N)}(\rr {2d})$ to
$M ^{p} _{(1/\omega _0)}(\rr {2d})$, and
\begin{equation}\label{Eq:MultLinKernelMapEst}
\begin{aligned}
\nm {K_1\circ K_2\circ \cdots \circ K_N}{M^p_{(1/\omega _0)}}
&\lesssim
\prod _{j=1}^N \nm {K_j}{M^{p_j}_{(\omega _j)}},
\\[1ex]
K_j &\in M^{p_j}_{(\omega _j)}(\rr {2d}),\ j=1,\dots ,N\text ;
\end{aligned}
\end{equation}

\par

\item[{\rm{(2)}}] if $p_j=p$ when $j$ is even and
$p_j=p'$ when is odd,
then the map \eqref{Eq:MultLinKernelMap}
from $\maclS _{1/2}(\rr {2d}) \times \cdots \times
\maclS _{1/2}(\rr {2d})$ to $\maclS _{1/2}(\rr {2d})$
extends uniquely to a continuous and associative map from
$M ^{p_1} _{(\omega _1)}(\rr {2d}) \times \cdots
\times M ^{p_N} _{(\omega _N)}(\rr {2d})$ to
$M ^{p'} _{(1/\omega _0)}(\rr {2d})$.
\end{enumerate}
\end{prop}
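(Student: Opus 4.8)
The plan is to reduce everything to the multi-linear estimate \eqref{Eq:MultLinKernelMapEst} and then verify it by a careful analysis of the iterated kernel composition in Schwartz-type spaces, controlling norms via Hölder's inequality and the weight hypothesis \eqref{Eq:KernelAlgWeightCond}. First I would write out $K_1\circ K_2\circ\cdots\circ K_N$ explicitly as an iterated integral, $K_0(x_0,x_N)=\int\cdots\int K_1(x_0,x_1)K_2(x_1,x_2)\cdots K_N(x_{N-1},x_N)\,dx_1\cdots dx_{N-1}$, and apply a short-time Fourier transform in the variables $(x_0,x_N)$ with a window $\Phi_0$ of the form $\Phi_1\circ\cdots\circ\Phi_N$, so that, after a suitable change of variables, $V_{\Phi_0}K_0$ is expressed through the $V_{\Phi_j}K_j$ together with a quadratic phase factor (this is the kernel-side analog of Lemma \ref{prodlemma}, and is where the weight condition \eqref{Eq:KernelAlgWeightCond}, reading off the diagonal-in-$j$ structure of the composed kernels, enters to absorb $\omega_0/\prod\omega_j$).

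Next, I would establish the core inequality \eqref{Eq:MultLinKernelMapEst} for $K_j\in\maclS_{1/2}(\rr{2d})$. The idea is the following. Because $N$ is odd, the exponents split into $p_0=p_N=p$, $p_j=p'$ for $j$ even (the ``inner dual'' slots), and $p_j=\max(1,p)$ for the odd interior slots $j\in[3,N-2]$. One estimates the $L^p$ norm of $V_{\Phi_0}K_0(\cdot,Y)$ for fixed $Y$ by bringing the $L^p$ norm inside the integral over the intermediate frequency/phase-space variables and then iterating Hölder's inequality: each even slot contributes an $L^{p'}$ factor that pairs with a neighboring $L^p$ factor (exactly as two Hilbert--Schmidt kernels compose when $p=2$, and more generally as the kernel of an $M^{p'}$-symbol maps $M^p\to M^p$), while the odd interior slots are handled with the $L^{\max(1,p)}$ control. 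The quadratic phase is harmless because it has modulus one. For $p\ge 1$ this is a straightforward iteration of Young's and Hölder's inequalities; for $0<p<1$ one works with the quasi-norm description via $\phi\in\Sigma_1$ (as in the Remark following Proposition \ref{p1.4}) and uses the elementary inequality $\|fg\|_{L^p}\le\|f\|_{L^\infty}\|g\|_{L^p}$ together with the quasi-triangle inequality, which is why the interior odd exponents must be raised to $\max(1,p)$.

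Once \eqref{Eq:MultLinKernelMapEst} is proved on $\maclS_{1/2}$, I would pass to general $K_j\in M^{p_j}_{(\omega_j)}$ by a density/uniqueness argument. When all $p_j<\infty$ one uses norm density of $\maclS_{1/2}$ from Proposition \ref{p1.4}~(4); when some $p_j=\infty$ (which here forces $p=\infty$, hence $p'=1$ and $p_j=1$ for even $j$) one replaces norm approximation with narrow convergence, invoking Proposition \ref{narrowprop} together with Definition \ref{p2.1}: approximate the $M^{\infty}$-factors narrowly, note that $H_{K_j^{(n)},\omega_j,\infty}$ is dominated by an $L^\infty$-convergent envelope, and check that the multilinear map is continuous with respect to this mode of convergence, which both gives existence and pins down the extension uniquely. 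Associativity of the composition then follows because on $\maclS_{1/2}$ the composition of kernels is literally associative, and the subproducts $K_{k_1}\circ\cdots\circ K_{k_2}$ again fall under the scope of Proposition \ref{Prop:Prop1} or the present proposition, so the identity $K_1\circ\cdots\circ K_N=(K_1\circ\cdots\circ K_k)\circ(K_{k+1}\circ\cdots\circ K_N)$ extends by continuity. Statement~(2) is obtained from~(1) either by the duality \eqref{pre(A.1)} (composition of kernels is transposed to composition of transposed kernels, and $M^p$ is dual to $M^{p'}$ by Proposition \ref{p1.4}~(3)--(4)), or simply by relabeling so that the roles of even and odd slots are interchanged and the target becomes $M^{p'}$ rather than $M^p$.

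The main obstacle will be the combinatorial bookkeeping in the Hölder iteration: one must check that, for odd $N$, the alternating pattern of exponents $p,p',\max(1,p),p',\dots,p',p$ is exactly compatible at every stage of the iterated integration — that is, that each application of Hölder leaves a ``residual'' exponent which is again admissible for the next step — and that the endpoint cases $p=\infty$ and $p\le 1$ do not break this. This is the point where the oddness of $N$ is essential (for even $N$ the pattern would not close up, recovering only the weaker bilinear-type bound), and where one must be careful that the window $\Phi_0=\Phi_1\circ\cdots\circ\Phi_N$ indeed lies in $\maclS_{1/2}$ (equivalently $\Sigma_1$) so that Proposition \ref{p1.4}~(1) legitimizes using it as an STFT window.
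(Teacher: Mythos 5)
Your high-level architecture (prove the multilinear estimate for factors in $\mathcal S_{1/2}$, extend by density, get associativity by continuity, deduce (2) by transposition) is sound, but your route differs from the paper's and has two genuine gaps. The paper does not run an iterated H\"older argument on an STFT integral representation of the composed kernel: it rewrites $K_1\circ\cdots\circ K_N$ algebraically as a single $L^2$-pairing $(G(x_0,x_N,\cdot\,),\overline H)_{L^2}$, where $G$ is the tensor product of the two end kernels and $H$ is obtained by pairing the tensor product of the even-indexed kernels against the tensor product of the interior odd-indexed kernels, and then invokes tensor-product and $M^{p}$--$M^{p'}$ duality estimates for modulation norms. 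Your H\"older scheme does work for $p\in[1,\infty]$ (a single application of H\"older in all intermediate phase-space variables, evens against odds, closes up precisely because $N$ is odd), but for $p<1$ it breaks down: after bounding the even factors in $L^{p'}=L^\infty$, the remaining integrand contains the end factors $|V_{\Phi_1}K_1(z_0,z_1)|$ and $|V_{\Phi_N}K_N(z_{N-1},z_N)|$, which must be integrated in $L^1$ with respect to the adjacent intermediate variable, while the hypothesis only gives $L^p$ control with $p<1$, and $L^p\not\subseteq L^1$ on $\rr {4d}$. What rescues the estimate is not the quasi-triangle inequality or $\nm {fg}{L^p}\le \nm f{L^\infty}\nm g{L^p}$, but a mixed-norm bound of the type $\big\| \nm {V_{\Phi_1}K_1(z_0,\cdot\,)}{L^1}\big\| _{L^p_{z_0}}\lesssim \nm {K_1}{M^{p}_{(\omega _1)}}$, i.e. the embedding of $M^{p,p}$ into mixed-norm modulation spaces with all exponents $\ge p$, which rests on the reproducing/window-change inequality $|V_\Phi K|\lesssim |V_{\Phi_0}K|*|V_\Phi \Phi_0|$. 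This ingredient is absent from your argument, and it is exactly what the paper's tensor/duality factorization supplies implicitly.

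The second gap is the uniqueness step when some exponent is infinite. You invoke narrow convergence, but Proposition \ref{narrowprop} gives narrow density only for the spaces with second exponent $q<\infty$, whereas the spaces occurring here are diagonal, with both exponents equal to $\infty$, so no narrow approximation is available; moreover your parenthetical claim that $p_j=\infty$ forces $p=\infty$ is wrong: in case (1) the even slots have $p_j=p'=\infty$ whenever $p\le 1$. The paper's mechanism is different and is the real point of the odd-factor structure: the composition extends, uniquely by weak approximation in $\mathcal S_{1/2}'$, to separately continuous maps $\mathcal S_{1/2}\times\mathcal S_{1/2}'\times\mathcal S_{1/2}\times\cdots\to\mathcal S_{1/2}$ and $\mathcal S_{1/2}'\times\mathcal S_{1/2}\times\cdots\to\mathcal S_{1/2}'$, and in every case at least one parity class of slots carries a finite exponent (if $p=\infty$ then the even slots carry $p'=1$; if $p\le 1$ then the odd slots carry exponents $\le 1<\infty$), so norm density of $\mathcal S_{1/2}$ in those slots, combined with the distributional extension in the complementary slots, pins down the extension. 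Your associativity argument and the transposition/relabelling reduction of (2) to (1) are fine as stated.
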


\par

\begin{proof}
We observe that \eqref{Eq:KernelAlgWeightCond} is the same
as
$$
\inf _{x_j,\xi _j\in \rr d}
\left (
\omega _0(x_0,x_N,\xi _0,\xi _N) \prod _{j=1}^N
\omega _j(x_{j-1},x_j,(-1)^{j-1}\xi _{j-1},(-1)^{j-1}\xi _j)
\right ) >0.
$$
First suppose that $K_j\in \maclS _{1/2}(\rr {2d})$ for every
$j$.
Let $p_0=\max (p,1)$,
\begin{align*}
\widetilde K_j
&=
\begin{cases}
K_j,& j\ \text{odd}
\\[1ex]
\overline{K_j},& j\ \text{even}
\end{cases}
\\[1ex]
\widetilde \omega _j(x,y,\xi ,\eta )
&=
\begin{cases}
\omega _j(x,y,\xi ,\eta ),& j\ \text{odd}
\\[1ex]
\omega _j(x,y,-\xi ,-\eta ),& j\ \text{even}
\end{cases}
\\[1ex]
y &= (x_2,x_3,\dots ,x_{N-2}),
\qquad
\eta = (\xi _2,\xi _3,\dots ,\xi _{N-2}),
\\[1ex]
G(x_0,x_N,x_1,x_{N-1}) &= \widetilde K_1(x_0,x_1)
\widetilde K_N(x_{N-1},x_N),
\\[1ex]
H_1 (x_1,x_{N-1},y)
&=
\prod _{j=1}^{(N-1)/2}\widetilde K_{2j}(x_{2j-1},x_{2j}),
\\[1ex]
H_2 (y) &=
\prod _{j=1}^{(N-3)/2}\widetilde K_{2j+1}(x_{2j},x_{2j+1}),
\\[1ex]
H(x_0,x_N) &= (H_2,H_1(x_1,x_{N-1},\cdo ))_{L^2},
\end{align*}
\begin{align*}
\vartheta _0(x_0,x_N,x_1,x_{N-1},&\xi _0,\xi _N,\xi _1,\xi _{N-1})
\\
&=
\omega _1(x_0,x_1,\xi _0,\xi _1)\omega _N(x_{N-1},x_N,\xi _{N-1},\xi _N),
\\[1ex]
\vartheta _1(x_1,x_{N-1},y,\xi _1,\xi _{N-1},\eta )
&=
\prod _{j=1}^{(N-1)/2}\widetilde \omega _{2j}(x_{2j-1},x_{2j},\xi _{2j-1},\xi _{2j}),
\intertext{and}
\vartheta _2(y,\eta )
&=
\prod _{j=1}^{(N-3)/2}\widetilde \omega _{2j+1}(x_{2j},x_{2j+1},\xi _{2j},\xi _{2j+1}).
\end{align*}
Then it follows by straight-forward computations that
\begin{equation}\label{Eq:CompKernelRef}
(K_1\circ \cdots \circ K_N)(x_0,x_N) = (G(x_0,x_N,\cdo ),\overline H)_{L^2},
\end{equation}
and that the right-hand side makes sense as an element in $\maclS _{1/2}(\rr {2d})$
when $K_j\in \maclS _{1/2}'(\rr {2d})$ for even $j$. In the same way,
the right-hand side of \eqref{Eq:CompKernelRef}
makes sense as an element in $\maclS _{1/2}'(\rr {2d})$
when $K_j\in \maclS _{1/2}'(\rr {2d})$ for odd $j$. Hence the map
\eqref{Eq:MultLinKernelMap} extends to continuous mappings from
$$
\maclS _{1/2}(\rr {2d})\times \maclS _{1/2}'(\rr {2d}) \times \maclS _{1/2}(\rr {2d})
\times \cdots \times \maclS _{1/2}(\rr {2d})
$$
to $\maclS _{1/2}(\rr {2d})$, and from
$$
\maclS _{1/2}'(\rr {2d})\times \maclS _{1/2}(\rr {2d}) \times \maclS _{1/2}'(\rr {2d})
\times \cdots \times \maclS _{1/2}'(\rr {2d})
$$
to $\maclS _{1/2}'(\rr {2d})$. The uniquenesses of these extensions follows
by approximating those $K_j$ which belong to
$\maclS _{1/2}'(\rr {2d})$, by taking sequences
of elements $\maclS _{1/2}(\rr {2d})$ which converge to those $K_j$ in
$\maclS _{1/2}'(\rr {2d})$.

\par

We have that $\maclS _{1/2}(\rr {2d})$ is dense in $M^p_{(\omega _j)}(\rr {2d})
\subseteq \maclS _{1/2}'(\rr {2d})$ when
$p<\infty$, and that $p'=1<\infty$ when $p=\infty$
and $p\le 1<\infty$ when $p'=\infty$. Hence it follows from the recent
uniqueness properties, that the result follows if we prove that
\eqref{Eq:MultLinKernelMapEst} holds when $K_j\in \maclS _{1/2}(\rr {2d})$
for every $j$.

\par

We have
\begin{align}
\nm {\widetilde K_j}{M^{p_j}_{(\widetilde \omega _j)}}
&=
\nm {K_j}{M^{p_j}_{(\omega _j)}},
\\[1ex]
\nm G{M^p_{(\vartheta _0)}}
&\lesssim
\nm {K_1}{M^p_{(\omega _1)}}\nm {K_N}{M^p_{(\omega _N)}},
\label{Eq:KernelGNormEst} 
\\[1ex]
\nm {H_1}{M^{p'}_{(\vartheta _1)}(\rr {(N-1)d})}
&=
\prod _{j=1}^{(N-1)/2}
\nm {\widetilde K_{2j}}{M^{p'}_{(\widetilde \omega _{2j})}(\rr {2d})},
\notag
\\[1ex]
\nm {H_2}{M^{p'}_{(\vartheta _2)}(\rr {(N-3)d})}
&=
\prod _{j=1}^{(N-3)/2}
\nm {\widetilde K_{2j+1}}{M^{p'}_{(\widetilde \omega _{2j+1})}(\rr {2d})},
\notag
\intertext{which implies}
\nm {\overline H}{M^{p'}_{(\vartheta )}}
&\lesssim
\prod _{j=2}^{N-1}\nm {\widetilde K_j}{M^{p_j}_{(\widetilde \omega _j)}}
=
\prod _{j=2}^{N-1}\nm {K_j}{M^{p_j}_{(\omega _j)}}
\label{Eq:ModNormConjHEst}
\end{align}
A combination of this estimate with \eqref{Eq:CompKernelRef},
\eqref{Eq:KernelGNormEst} and \eqref{Eq:ModNormConjHEst}
gives \eqref{Eq:MultLinKernelMapEst}, and (1) follows.

\par

The assertion (2) follows by similar arguments and is left for the reader.
\end{proof}

\par

The following result now follows by interpolation between
Propositions \ref{Prop:Prop1} and \ref{Prop:Prop2}.
Here and in what follows we let
$$
I_N=[0,N]\cap \mathbf Z,
\quad \text{and}\quad 
\Omega _N =\sets {(j,k)\in I_N^2}{j+k\in 2\mathbf Z +1},
$$
and
\begin{equation}\label{Eq:sfQfunctional1}
\begin{aligned}
\masfQ _{0,N}(x,y)
&=
\min _{j+k\in 2\mathbf Z+1}
\left ( \frac {x_j+y_k}2 \right ),
\\[1ex]
\masfQ _N(x,y)
&=
\min _{j+k\in 2\mathbf Z+1}
\left ( \frac {x_j+y_k}2,1-\frac {x_j+y_k}2 \right ),
\\[1ex]
\masfQ _{0,N}(x) &= \masfQ _{0,N}(x,x),
\qquad
\masfQ _N(x) = \masfQ _N(x,x),
\\[1ex]
x
&=
(x_0,x_1,\dots ,x_N)\in [0,1]^{N+1},
\\[1ex]
y
&=
(y_0,y_1,\dots ,y_N)\in [0,1]^{N+1}.
\end{aligned}
\end{equation}

\par

\begin{prop}\label{Prop:MainPropOdd}
Let $N\ge 3$ be odd, $\masfR _N$ be as in \eqref{Eq:HYfunctional},
$\masfQ _N$ be as in \eqref{Eq:sfQfunctional1}, and let
$p_j,q_j\in [1,\infty ]$, $j=0,1,\dots , N$, be such that
\begin{equation}\label{Eq:pqConditionsA}
\max \left ( \masfR _N({\textstyle{\frac 1{q'}}}) ,0 \right )
\le  \min 
\left ( \masfQ _N({\textstyle{\frac 1p}}), \masfQ _N({\textstyle{\frac 1{q'}}}),
\masfQ _N({\textstyle{\frac 1p,\frac 1q}}),
\masfR _N({\textstyle{\frac 1p}})\right ).
\end{equation}
Also let $\omega _j \in \mascP _E(\rr {4d})$, $j=0,1,\dots ,N$, and
suppose \eqref{Eq:WeightCond} holds. 
Then the map \eqref{Eq:Weylmap} 
from $\maclS _{1/2}(\rr {2d}) \times \cdots \times
\maclS _{1/2}(\rr {2d})$ to $\maclS _{1/2}(\rr {2d})$
extends uniquely to a continuous and associative map from
$\splM ^{p_1,q_1} _{(\omega _1)}(\rr {2d}) \times \cdots
\times \splM ^{p_N,q_N} _{(\omega _N)}(\rr {2d})$ to
$\splM ^{p_0',q_0'} _{(1/\omega _0)}(\rr {2d})$.
\end{prop}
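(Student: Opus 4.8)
The plan is to obtain Proposition~\ref{Prop:MainPropOdd} by multilinear complex interpolation between Propositions~\ref{Prop:Prop1} and \ref{Prop:Prop2}. Since the weight condition \eqref{Eq:WeightCond} is common to both of those results, and, for a fixed weight, the spaces $\splM^{p,q}_{(\omega)}(\rr{2d})$ form a complex interpolation scale (Proposition~\ref{p1.4}~(5)), it suffices to carry out three steps: \emph{(a)} to show that every exponent vector $(\frac1p,\frac1q)$ satisfying \eqref{Eq:pqConditionsA} is a complex-interpolation combination of one exponent vector obeying the hypothesis of Proposition~\ref{Prop:Prop1} and one of the two alternating-exponent configurations covered by Proposition~\ref{Prop:Prop2}; \emph{(b)} to apply the $N$-linear complex interpolation theorem to the bounds supplied by those two propositions; and \emph{(c)} to check that the resulting map is the \emph{unique} continuous extension of \eqref{Eq:Weylmap} from $\maclS_{1/2}(\rr{2d})$.

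For \emph{(a)}, fix $(\frac1p,\frac1q)$ with \eqref{Eq:pqConditionsA}, and seek $\theta\in[0,1]$, a real $p^{(1)}\ge 1$, and an exponent vector $(\frac1{p^{(0)}},\frac1{q^{(0)}})$ so that, componentwise,
\begin{equation*}
\frac1{p_j}=(1-\theta)\frac1{p_j^{(0)}}+\theta r_j,
\qquad
\frac1{q_j}=(1-\theta)\frac1{q_j^{(0)}}+\theta r_j ,
\end{equation*}
where $r_j=\frac1{p^{(1)}}$ or $r_j=1-\frac1{p^{(1)}}$ according to the parity of $j$ --- that is, $(r_j)_j$ is one of the two alternating $p^{(1)}/(p^{(1)})'$ patterns of Proposition~\ref{Prop:Prop2}, the two parts of that proposition furnishing the two parities and hence the two possible outputs $\splM^{p^{(1)}}$ and $\splM^{(p^{(1)})'}$. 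Solving these relations for $(\frac1{p^{(0)}},\frac1{q^{(0)}})$ with $\theta/2$ taken in the interval cut out by \eqref{Eq:pqConditionsA}, the three functionals $\masfQ_N(\frac1p)$, $\masfQ_N(\frac1{q'})$, $\masfQ_N(\frac1p,\frac1q)$ on the right of \eqref{Eq:pqConditionsA} are exactly what forces all of $\frac1{p_j^{(0)}},\frac1{q_j^{(0)}}$ into $[0,1]$ --- the minimum over pairs $j+k$ odd in the definition of $\masfQ_N$ reflecting that these are the index pairs that become conjugate-paired at the Proposition~\ref{Prop:Prop2} endpoint, and $\masfQ_N(\frac1p,\frac1q)$ controlling the simultaneous $p$- and $q$-interpolation --- while the remaining part of \eqref{Eq:pqConditionsA} translates, via the identity \eqref{Eq:HYfunctionalConj}, into $\masfR_N(\frac1{(q^{(0)})'})\le 0\le\masfR_N(\frac1{p^{(0)}})$, i.e.\ the hypothesis of Proposition~\ref{Prop:Prop1}. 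I expect this geometric bookkeeping --- which is what actually dictates the shape of the admissible region and hence the functionals $\masfQ_N$ --- to be the main obstacle; the rest is a direct computation with the definitions \eqref{Eq:sfQfunctional1}.

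Granting \emph{(a)}, step \emph{(b)} is routine: in the range $p^{(1)}\ge 1$ the endpoint modulation spaces are Banach, Propositions~\ref{Prop:Prop1} and \ref{Prop:Prop2} give boundedness of \eqref{Eq:Weylmap} on the two endpoint tuples, and the $N$-linear complex interpolation theorem together with Proposition~\ref{p1.4}~(5) yields
\begin{equation*}
\nm{a_1\wpr\cdots\wpr a_N}{\splM^{p_0',q_0'}_{(1/\omega_0)}}
\lesssim
\prod_{j=1}^{N}\nm{a_j}{\splM^{p_j,q_j}_{(\omega_j)}},
\qquad a_j\in\maclS_{1/2}(\rr{2d}).
\end{equation*}
Associativity is inherited from Propositions~\ref{Prop:Prop1} and \ref{Prop:Prop2}: the identity $a_1\wpr\cdots\wpr a_N=(a_1\wpr\cdots\wpr a_k)\wpr(a_{k+1}\wpr\cdots\wpr a_N)$ already holds on the dense subspace $\maclS_{1/2}(\rr{2d})$, and both sides are continuous in every argument by the estimate just obtained.

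For \emph{(c)}, when all $p_j,q_j<\infty$ uniqueness is immediate from the norm density of $\maclS_{1/2}(\rr{2d})$ in the $\splM^{p_j,q_j}_{(\omega_j)}(\rr{2d})$ (Proposition~\ref{p1.4}~(4)). When some $p_j$ or $q_j$ equals $\infty$, $\maclS_{1/2}(\rr{2d})$ need not be norm dense, and one argues --- as announced at the beginning of this section --- by narrow convergence: one approximates the offending factors narrowly (Proposition~\ref{narrowprop}) and uses the integral representation \eqref{STFTintegral} of Lemma~\ref{prodlemma} to see that, with the remaining factors fixed in $\maclS_{1/2}(\rr{2d})$, narrow convergence of one factor forces convergence of $a_1\wpr\cdots\wpr a_N$ in $\maclS_{1/2}'(\rr{2d})$; together with the norm bound this determines the extension uniquely. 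This narrow-convergence step is delicate, but it runs along exactly the lines already used for Propositions~\ref{Prop:Prop1} and \ref{Prop:Prop2}.
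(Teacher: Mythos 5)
Your overall strategy --- multilinear complex interpolation between Propositions \ref{Prop:Prop1} and \ref{Prop:Prop2}, with uniqueness handled by density and narrow convergence --- is exactly the route the paper takes, and your steps (b) and (c) are at the same level of detail as the paper's treatment. The genuine gap is in step (a), which you describe but do not carry out, and which is where essentially all of the content of the proposition lies. You assert that the functionals $\masfQ_N({\textstyle{\frac 1p}})$, $\masfQ_N({\textstyle{\frac 1{q'}}})$, $\masfQ_N({\textstyle{\frac 1p,\frac 1q}})$ ``are exactly what forces'' the Proposition~\ref{Prop:Prop1} endpoint into $[0,1]^{N+1}$, but solvability of the componentwise relations $\frac 1{p_j}=(1-\theta)\frac 1{p_j^{(0)}}+\theta r_j$ requires the \emph{individual} bounds $\frac 1{p_j}\ge \theta r_j$ and $\frac 1{p_j'}\ge \theta (1-r_j)$ for every $j$, whereas \eqref{Eq:pqConditionsA} only controls \emph{pairwise averages} $\frac 12(\frac 1{p_j}+\frac 1{p_k})$ over indices of opposite parity. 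Hence for a generic choice of $\theta$ and of the alternating parameter $p^{(1)}$ the system has no admissible solution; the parameters must be constructed from the given $(p,q)$, and exhibiting that construction is the proof.

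Concretely, the paper sets $\theta =2\masfR_N({\textstyle{\frac 1{q'}}})$ (the case $\masfR_N({\textstyle{\frac 1{q'}}})\le 0$ being already covered by Proposition \ref{Prop:Prop1}), normalizes so that $\frac 1{p_0}$ is the smallest of all the reciprocals $\frac 1{p_j},\frac 1{q_j}$, disposes of the case $\frac 1{p_0}\ge \frac \theta 2$ by \cite[Proposition 2.5]{CoToWa}, and otherwise defines $v$ through $\frac 1{p_0}=\frac \theta v$. The parity pairing built into $\masfQ_N$ then yields $\frac 1{p_k},\frac 1{q_k}\ge \frac \theta {v'}$ for indices $k$ of the opposite parity, which is what places the residual exponents $r,s$ in $[1,\infty ]^{N+1}$; a separate computation using the affine form of $\masfR_N$ and \eqref{Eq:HYfunctionalConj} then verifies $\masfR_N({\textstyle{\frac 1{s'}}})\le 0\le \masfR_N({\textstyle{\frac 1r}})$, i.e.\ the hypothesis of Proposition \ref{Prop:Prop1} at the other endpoint. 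None of this is routine bookkeeping that can be deferred: without the explicit choice of $\theta$ and $v$ and the accompanying case analysis, your step (a) is an announcement of the result rather than an argument for it.
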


\par

We observe that $\masfQ _N({\textstyle{\frac 1{q'}}})
=\masfQ _N({\textstyle{\frac 1q}})$ when $q$ is the same as
in the previous proposition.

\par

\begin{proof}
Evidently, the result holds true when $\masfR _N(q ') \le 0$ in view
of Proposition \ref{Prop:Prop1}. We need to prove the result when
$\masfR _N({\textstyle{\frac 1{q'}}}) \ge 0$.

\par

We use the same notations as in Lemma \ref{Lemma:Athm0.3}
and its proof. By Propositions \ref{Prop:Prop1} and \ref{Prop:Prop2}
we have
\begin{align}
\splM ^{r_1,s_1}_{(\omega _1)}
\times \cdots \times
\splM ^{r_N,s_N}_{(\omega _N)} &\hookrightarrow \splM ^{r_0',s_0'}
_{(1/\omega _0)}
\label{Eq:WeylProdExprs}
\intertext{and}
\splM ^{u_1,u_1}_{(\omega _1)}
\times \cdots \times
\splM ^{u_N,u_N}_{(\omega _N)} &\hookrightarrow \splM ^{u_0',u_0'}
\label{Eq:WeylProdExpu}
\end{align}
when $r_j,s_j,u_j\in [1,\infty ]$, $j\in I_N$, satisfy
\begin{align}
\sum _{j=0}^N\frac 1{s_j'} &\le 1\le \sum _{j=0}^N\frac 1{r_j},
\label{Eq:reCond}
\intertext{and}
u_j &=
\begin{cases}
v', & j\in 2\mathbf Z,
\\[1ex]
v, & j\in 2\mathbf Z+1,
\end{cases}
\label{Eq:uTovCond}
\end{align}
for some $v\in [1,\infty ]$. Hence, by
combining Proposition \ref{p1.4} (5)
with multi-linear interpolation in \cite[Chapter 4]{BeLo},
we get
$$
\splM ^{p_1,q_1}_{(\omega _1)}
\times \cdots \times
\splM ^{p_N,q_N}_{(\omega _N)} \hookrightarrow \splM ^{p_0',q_0'}
_{(1/\omega _0)}
$$
when
\begin{alignat}{3}
\frac 1{p_j} &= \frac {1-\theta}{r_j}+\frac \theta{v'}, &
\qquad
\frac 1{q_j} &= \frac {1-\theta}{s_j}+\frac \theta{v'}, &
\qquad j&\in 2\mathbf Z
\label{Eq:InterpolCond1}
\intertext{and}
\frac 1{p_k} &= \frac {1-\theta}{r_k}+\frac \theta{v}, &
\qquad
\frac 1{q_k} &= \frac {1-\theta}{s_k}+\frac \theta{v}, &
\qquad k&\in 2\mathbf Z+1.
\label{Eq:InterpolCond2}
\end{alignat}
This gives
\begin{multline*}
\sum _{j=0}^N \frac 1{p_j} = 
(1-\theta )\sum _{j=0}^N \frac 1{r_j}
+ \theta \cdot \frac {N+1}2
\left (
\frac 1v+\frac 1{v'}
\right )
\\[1ex]
=
(1-\theta )\sum _{j=0}^N \frac 1{r_j}
+ \theta \cdot \frac {N+1}2 \ge 1+\theta \cdot \frac {N-1}2
\end{multline*}
and
\begin{multline*}
\sum _{j=0}^N \frac 1{q_j'} = 
(1-\theta )\sum _{j=0}^N \frac 1{s_j'}
+ \theta \cdot \frac {N+1}2
\left (
\frac 1v+\frac 1{v'}
\right )
\\[1ex]
=
(1-\theta )\sum _{j=0}^N \frac 1{s_j'}
+ \theta \cdot \frac {N+1}2 
\le 1+\theta \cdot \frac {N-1}2 ,
\end{multline*}
which implies
\begin{equation}\label{Eq:pqConditionsAStep1}
\masfR _N({\textstyle{\frac 1{q'}}})\le \frac \theta 2
\le \masfR _N({\textstyle{\frac 1p}}).
\end{equation}
In particular we have $R_N({\textstyle{\frac 1{q'}}}) \le \frac \theta 2$.

\par

By \eqref{Eq:InterpolCond1} and \eqref{Eq:InterpolCond2} we also get
$$
\frac 1{p_j}+\frac 1{p_k}
= \frac {1-\theta}{r_j}+\frac {1-\theta}{r_k}+\frac \theta{v}+\frac \theta{v'}
\ge
\frac \theta{v}+\frac \theta{v'}=\theta ,
$$
when $j+k$ is odd. That is,
\begin{equation}\label{Eq:IneqLebExp1}
\frac 12\left ( \frac 1{p_j}+\frac 1{p_k} \right ) \ge \frac \theta 2
\ge R_N({\textstyle{\frac 1{q'}}}).
\end{equation}
In the same way we get
$$
\frac 12\left ( \frac 1{q_j}+\frac 1{q_k} \right ) \ge \frac \theta 2
\ge R_N({\textstyle{\frac 1{q'}}})
\quad \text{and}\quad
\frac 12\left ( \frac 1{p_j}+\frac 1{q_k} \right ) \ge \frac \theta 2
\ge R_N({\textstyle{\frac 1{q'}}})
$$
when $j+k$ is odd. We also have
$$
\frac 1{p_j'}+\frac 1{p_k'}
= \frac {1-\theta}{r_j'}+\frac {1-\theta}{r_k'}+\frac \theta{v'}+\frac \theta{v}
\ge
\frac \theta{p}+\frac \theta{p'}=\theta ,
$$
when $j+k$ is odd, and it follows that \eqref{Eq:IneqLebExp1}
and its two following inequalities hold true with $p_j'$, $p_k'$,
$q_j'$ and $q_k'$ in place of $p_j$, $p_k$,
$q_j$ and $q_k$, respectively, at each occurrence. By combining
these inequality we get \eqref{Eq:pqConditionsA}.

\par

In order for verify the the interpolation completely, we need to prove
that if $p,q\in [1,\infty ]^{N+1}$ satisfy \eqref{Eq:pqConditionsA},
then there are $r,s,u\in [1,\infty ]^{N+1}$, $v\in [1,\infty ]$
and $\theta \in [0,1]$ such that
\eqref{Eq:WeylProdExprs}--\eqref{Eq:InterpolCond2} hold.
As remarked above, the result holds true if
$\masfR _N({\textstyle{\frac 1{q'}}})\le 0$. Therefore assume that
$\masfR _N({\textstyle{\frac 1{q'}}})>0$. By \eqref{Eq:pqConditionsA}
it follows that $\masfR _N({\textstyle{\frac 1{q'}}})\le \frac 12$. Choose
$\theta \in (0,1]$ such that $\masfR _N({\textstyle{\frac 1{q'}}})=\frac \theta 2$.
By reasons of symmetry we may assume that
$$
p_0=\min (j\in I_N)(p_j,q_j),
$$
and we shall consider the two cases when $\frac 1{p_0}\ge \frac \theta 2$
and when $\frac 1{p_0}< \frac \theta 2$, separately.

\par

First suppose that $\frac 1{p_0}\ge \frac \theta 2$. Then
$$
\min \left ( \frac 1{p},\frac 1{p'},\frac 1{q},\frac 1{q'},
\masfR _N({\textstyle{\frac 1p}})\right )
\ge \masfR _N({\textstyle{\frac 1{q'}}}),
$$
and the result follows from \cite[Proposition 2.5]{CoToWa}.

\par

Therefore assume that $\frac 1{p_0}< \frac \theta 2$ and let $v>2$ be
chosen such that
$$
\frac 1{p_0}=\frac \theta v.
$$
Then
\begin{equation}\label{Eq:LebExpEstEven}
\frac 1{p_j},\frac 1{q_j} \ge \frac 1{p_0} = \frac \theta v,
\qquad j\in 2\mathbf Z
\end{equation}
and since
$$
\masfQ _N({\textstyle{\frac 1p}}), \masfQ _N({\textstyle{\frac 1q}},
\masfQ _N({\textstyle{\frac 1p,\frac 1q}})
\ge
\masfR _N({\textstyle{\frac 1{q'}}}) = \frac \theta 2,
$$
we get
$$
\frac \theta v+\frac 1{p_k} = \frac 1{p_0}+\frac 1{p_k} \ge \theta
$$
and
$$
\frac \theta v+\frac 1{q_k} = \frac 1{p_0}+\frac 1{q_k} \ge \theta
$$
when $k$ is odd. This implies that
\begin{equation}\label{Eq:LebExpEstOdd}
\frac 1{p_k},\frac 1{q_k} \ge \frac \theta {v'},\qquad k\in 2\mathbf Z+1.
\end{equation}

\par

By \eqref{Eq:LebExpEstEven} and \eqref{Eq:LebExpEstOdd},
there are $r,s\in [1,\infty ]^{N+1}$ such that
\eqref{Eq:InterpolCond1} and \eqref{Eq:InterpolCond2}
hold. We have
\begin{multline*}
(1-\theta ) \masfR _N({\textstyle{\frac 1r}}) = \masfR _N({\textstyle{\frac 1p}}) -\theta \masfR _N({\textstyle{\frac 1{v},\frac 1{v'},\dots ,\frac 1{v},\frac 1{v'}}})
\\[1ex]
\ge
\masfR _N({\textstyle{\frac 1{q'}}}) -\theta \left (
\frac 1{N-1}\left (
\frac {N+1}2 \cdot \left (
\frac 1{v'}+\frac 1v
\right )
-1
\right )
\right )
\\[1ex]
=
\frac \theta 2 -\theta \left (
\frac 1{N-1}\left (
\frac {N+1}2 \cdot \left (
\frac 1{v'}+\frac 1v
\right )
-1
\right )
\right )
=0
\end{multline*}
and
\begin{multline*}
(1-\theta ) \masfR _N({\textstyle{\frac 1{s'}}}) = \masfR _N({\textstyle{\frac 1{q'}}}) -\theta \masfR _N({\textstyle{\frac 1{v'},\frac 1{v},\dots ,\frac 1{v'},\frac 1{v}}})
\\[1ex]
=
\frac \theta 2 -\theta \left (
\frac 1{N-1}\left (
\frac {N+1}2 \cdot \left (
\frac 1{v'}+\frac 1v
\right )
-1
\right )
\right )
=0.
\end{multline*}
Consequently, if $p,q\in [1,\infty ]^{N+1}$ satisfy \eqref{Eq:pqConditionsA},
we have found $r,s,u\in [1,\infty ]^{N+1}$ 
and $\theta \in [0,1]$ such that
\eqref{Eq:WeylProdExprs}--\eqref{Eq:InterpolCond2} hold. Hence
the interpolation works out properly and the result follows.
\end{proof}

\par

Next we polish up Proposition \ref{Prop:MainPropOdd},
by purging away some superfluous conditions. More precisely
we have the following.

\par

\begin{thm}\label{Thm:MainThmOdd}
Let $N\ge 3$ be odd, $\masfR _N$ be as in \eqref{Eq:HYfunctional},
$\masfQ _{0,N}$ and $\masfQ _N$ be as in \eqref{Eq:sfQfunctional1},
and let $p_j,q_j\in [1,\infty ]$, $j=0,1,\dots , N$, be such that
\begin{equation}\label{Eq:pqConditionsB}
\max \left ( \masfR _N({\textstyle{\frac 1{q'}}}) ,0 \right )
\le  \min 
\left ( \masfQ _{N}({\textstyle{\frac 1p}}), \masfQ _{0,N}({\textstyle{\frac 1{q'}}}),
\masfQ _N({\textstyle{\frac 1p,\frac 1q}}),
\masfR _N({\textstyle{\frac 1p}})\right ).
\end{equation}
Also let $\omega _j \in \mascP _E(\rr {4d})$, $j=0,1,\dots ,N$, and
suppose \eqref{Eq:WeightCond} holds. 
Then the map \eqref{Eq:Weylmap} 
from $\maclS _{1/2}(\rr {2d}) \times \cdots \times
\maclS _{1/2}(\rr {2d})$ to $\maclS _{1/2}(\rr {2d})$
extends uniquely to a continuous and associative map from
$\splM ^{p_1,q_1} _{(\omega _1)}(\rr {2d}) \times \cdots
\times \splM ^{p_N,q_N} _{(\omega _N)}(\rr {2d})$ to
$\splM ^{p_0',q_0'} _{(1/\omega _0)}(\rr {2d})$.
\end{thm}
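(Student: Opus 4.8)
The plan is to reduce the theorem to Proposition \ref{Prop:MainPropOdd} by showing that the hypothesis \eqref{Eq:pqConditionsB} implies the hypothesis \eqref{Eq:pqConditionsA}. Since the multilinear maps and the target spaces occurring in the two statements are literally the same, the theorem then follows at once. (In fact the two conditions are equivalent, because $\masfQ _N\le \masfQ _{0,N}$, so nothing is lost in passing back and forth.) The only difference between \eqref{Eq:pqConditionsA} and \eqref{Eq:pqConditionsB} is that the entry $\masfQ _N(1/q')$ in the first has been relaxed to $\masfQ _{0,N}(1/q')$ in the second. Using $1/q_l'=1-1/q_l$ together with $1-\frac12(1/q_j'+1/q_k')=\frac12(1/q_j+1/q_k)$ in the definition \eqref{Eq:sfQfunctional1}, one has
$$
\masfQ _N(1/q')=\min\big(\masfQ _{0,N}(1/q'),\ \masfQ _{0,N}(1/q)\big).
$$
Hence, assuming \eqref{Eq:pqConditionsB}, all that remains is to verify the single inequality $\max(\masfR _N(1/q'),0)\le \masfQ _{0,N}(1/q)$; indeed the three entries $\masfQ _N(1/p)$, $\masfQ _N(1/p,1/q)$, $\masfR _N(1/p)$ of the minimum in \eqref{Eq:pqConditionsA} occur verbatim in \eqref{Eq:pqConditionsB}, and the entry $\masfQ _{0,N}(1/q')$ is the first half of the displayed identity.

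For this last inequality the argument I would give is elementary. If $\masfR _N(1/q')\le 0$ it is trivial, so put $R=\masfR _N(1/q')>0$. Since every value of $\masfQ _N$ lies in $[0,\frac12]$, the entry $R\le \masfQ _N(1/p)$ of \eqref{Eq:pqConditionsB} forces $R\le\frac12$, while the entry $R\le\masfQ _{0,N}(1/q')$ says that $1/q_j+1/q_k\le 2-2R$ whenever $j+k$ is odd. Write $m=(N+1)/2$ and split $\{0,1,\dots,N\}$ into its even-index set $E$ and odd-index set $O$; these have equal size $m$ precisely because $N$ is odd, and $j+k$ is odd exactly when one of $j,k$ lies in $E$. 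Now fix an arbitrary pair $(j_0,k_0)$ of opposite parity, pair the remaining $m-1$ indices of $E$ with the remaining $m-1$ indices of $O$ in any way, sum the inequality $1/q_j+1/q_k\le 2-2R$ over these $m-1$ pairs, and add $1/q_{j_0}+1/q_{k_0}$ to both sides, obtaining
$$
\sum_{l=0}^N \frac 1{q_l}\ \le\ (m-1)(2-2R)+\frac 1{q_{j_0}}+\frac 1{q_{k_0}}.
$$
Since $\sum_{l=0}^N 1/q_l=N-(N-1)R$ is just a rewriting of the definition of $\masfR _N(1/q')$ and $(m-1)(2-2R)=(N-1)(1-R)$, this simplifies to $1/q_{j_0}+1/q_{k_0}\ge 1\ge 2R$. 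As $(j_0,k_0)$ was arbitrary, $\masfQ _{0,N}(1/q)\ge\frac12\ge R$, so \eqref{Eq:pqConditionsA} holds in full and Proposition \ref{Prop:MainPropOdd} yields the desired extension.

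I do not anticipate a real obstacle: once the reduction to Proposition \ref{Prop:MainPropOdd} is set up, everything is bookkeeping with the definitions of $\masfR _N$ and $\masfQ _N$. The one step that requires a moment's care is the pairing argument, which uses the oddness of $N$ in an essential way (it is what makes $|E|=|O|$, so that $E\setminus\{j_0\}$ and $O\setminus\{k_0\}$ can be matched), mirroring the way oddness was used in Propositions \ref{Prop:Prop2} and \ref{Prop:Prop2Kernels}. A secondary point worth double-checking is that, after invoking the identity above, all four entries of the minimum in \eqref{Eq:pqConditionsA} are genuinely dominated by $\max(\masfR _N(1/q'),0)$, so that Proposition \ref{Prop:MainPropOdd} really does apply with no residual condition left unverified.
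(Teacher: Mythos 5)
Your proposal is correct and takes essentially the same route as the paper: reduce \eqref{Eq:pqConditionsB} to \eqref{Eq:pqConditionsA} and invoke Proposition \ref{Prop:MainPropOdd}, the crux being that $\masfR _N({\textstyle{\frac 1{q'}}})\le \masfQ _{0,N}({\textstyle{\frac 1{q'}}})$ forces $\masfR _N({\textstyle{\frac 1{q'}}})\le \masfQ _{0,N}({\textstyle{\frac 1{q}}})$. Your even/odd pairing argument (which indeed uses the oddness of $N$ to match the two index classes) is exactly the content of the paper's Lemma \ref{Lemma:Athm0.3}, which the paper cites rather than inlines.
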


%

\par

We need some preparations for the proof of Theorem
\ref{Thm:MainThmOdd}. First we have
the following analogy of \cite[Lemma 2.7]{CoToWa}.

\par

\begin{lemma}\label{Lemma:Athm0.3}
Let $N \ge 3$ be odd, $x_j\in [0,1]$, $y_{j,k}=\frac 12(x_j+x_k)$ $j,k=0,\dots ,N$,
and consider the inequalities:
\begin{enumerate}
\item[{\rm{(1)}}] $\displaystyle{(N-1)^{-1}\left (\sum _{k=0}^Nx_k -1\right )\le 
\min _{(j,k)\in \Omega _N}y_{j,k}}$;

\vrum

\item[{\rm{(2)}}] $y_{j,k}\le \frac 12$, for all $(j,k)\in \Omega _N$;

\vrum

\item[{\rm{(3)}}] $\displaystyle{(N-1)^{-1}\left (\sum _{k=0}^Nx_k -1\right )
\le \min _{(j,k)\in \Omega _N}(1-y_{j,k})}$.
\end{enumerate}
Then
$$
{\rm{(1)}}\Rightarrow {\rm{(2)}} \Rightarrow {\rm{(3)}}.
$$
\end{lemma}

\par

\begin{rem}\label{Remark:Athm0.3}
We notice the similarities between the previous lemma and
\cite[Lemma 2.7]{CoToWa}. In fact, let
$N \ge 2$, $x_j\in [0,1]$, $j=0,\dots ,N$ and consider the inequalities:
\begin{enumerate}
\item[{\rm{(1)}}] $\displaystyle{(N-1)^{-1}\left (\sum _{k=0}^Nx_k -1\right )\le 
\min _{0\le j\le N}x_j}$;

\vrum

\item[{\rm{(2)}}] $x_j+x_k\le 1$, for all $k\neq j$;

\vrum

\item[{\rm{(3)}}] $\displaystyle{(N-1)^{-1}\left (\sum _{k=0}^Nx_k -1\right )
\le \min _{0\le j\le N}(1-x_j)}$.
\end{enumerate}
Then Lemma \cite[Lemma 2.7]{CoToWa} shows that 
$$
{\rm{(1)}}\Rightarrow {\rm{(2)}} \Rightarrow {\rm{(3)}}.
$$
\end{rem}

\par

\begin{proof}
We shall use similar ideas as in the proof of Lemma
\cite[Lemma 2.7]{CoToWa}.
Let
\begin{equation}\label{Eq:J1NJ2NDef}
J_{1,N}=\{ 0,\dots ,N \} \cap 2\mathbf Z,
\quad \text{and}\quad
J_{2,N}=\{ 0,\dots ,N \} \cap (2\mathbf Z +1),
\end{equation}
and assume that (1) holds but (2) fails. Then $x_j+x_k>1$
for some $(j,k)\in \Omega _N$.
By renumbering we may assume that $x_2+x_3\le x_j+x_{j+1}$
for every $j\in J_{1,N}$, and that $x_0+x_1>1$. Then (1) and the
fact that there are $(N-1)/2$ pairs of $(j,j+1)$ with
$j\in J_{1,N}\setminus 0$ give
\begin{multline*}
(N-1)y_{2,3} = \frac {N-1}2 (2y_{2,3})
\le
\sum _{m=1}^{(N-1)/2} 2y_{2m,2m+1}
\\[1ex]
= \sum _{j=2}^{N} x_j
< \sum _{j=0}^Nx_j -1 \le (N-1)y_{2,3},
\end{multline*}
which is a contradiction. Hence the assumption
$x_0+x_1>1$ must be wrong and it follows that (1) implies (2).

\par

Now suppose that (2) holds, and let
$j_0\in J_{1,N}$ and $k_0\in J_{2,N}$.
Then
$$
x_j\le 1-x_k
\quad \text{and}\quad
x_j\le 1-x_k ,\quad j\in J_{1,N},\ k\in J_{2,N}.
$$
This gives
\begin{equation*}
\sum _{j\in J_{1,N}\setminus j_0} x_j \le \frac {N-1}2(1-x_{k_0}) 
\quad \text{and}\quad
\sum _{k\in J_{2,N}\setminus k_0} x_k \le \frac {N-1}2(1-x_{j_0}) ,
\end{equation*}
giving that
$$
\sum _{j\in I_{N}\setminus \{ j_0,k_0\} }
x_j \le (N-1)\left (1-\frac 12 \left ( x_{j_0} +x_{k_0}  \right ) \right )
= (N-1)(1-y_{j_0,k_0}).
$$
Since
$$
x_{j_0}+x_{k_0}-1 = 2y_{j_0,k_0}-1 \le 0,
$$
we obtain
\begin{multline*}
\sum _{j\in I_{N}} x_j -1
=
(x_{j_0}+x_{k_0}-1) + \sum _{j\in I_{N}\setminus \{ j_0,k_0\} } x_j
\\[1ex]
\le
\sum _{j\in I_{N}\setminus \{ j_0,k_0\} } x_j \le (N-1)(1-y_{j_0,k_0}).
\end{multline*}
Since $j_0\in J_{1,N}$ and $k_0\in J_{2,N}$ was chosen arbitrary, it
follows that (3) holds.
\end{proof}

\par

\begin{proof}[Proof of Theorem \ref{Thm:MainThmOdd}]
%
%
If $I_N=\{ 0,1,\dots ,N\}$ as before and $j,k\in I_N$ satisfies
$j+k\in 2\mathbf Z+1$. Then the assumptions and Lemma
\ref{Lemma:Athm0.3} implies that
\begin{equation}\label{Eq:RNqLessMeans}
0\le \masfR _N({\textstyle{\frac 1{q'}}}) \le \min 
\left (
\frac 12 \left ( \frac 1{q_j'}+\frac 1{q_k'} \right ),
\frac 12 \left ( \frac 1{q_j}+\frac 1{q_k} \right )
\right ),
\qquad j+k\in 2\mathbf Z +1.
\end{equation}
Hence \eqref{Eq:pqConditionsB} implies \eqref{Eq:pqConditionsA},
and the result follows from Proposition \ref{Prop:MainPropOdd}.
\end{proof}

\par

\begin{rem}
We observe that Theorem \ref{Thm:MainThmOdd} implies that
the inclusion
\begin{equation}\label{Ex3-lin-form}
\splM ^{\infty ,1}\wpr \splM ^{2,2}\wpr \splM ^{2,2} \subseteq \splM ^{2,2}.
\end{equation}
In this context we observe that Theorem 0.1$'$ in \cite{CoToWa}
does ensure the validity of this inclusion, while Theorem 2.9 in
\cite{CoToWa} does.
\end{rem}

%

%
%
%
%
%
%

\par

\par

We may use \eqref{calculitransform} and Proposition
\ref{propCalculiTransfMod} to extend Theorem \ref{Thm:MainThmOdd}
 to involve more general products
arising in the pseudo-differential calculi. More precisely, the let
$\GL (d,\Omega)$ be the set of all $d\times d$ matrices with entries in
the set $\Omega$, and let $A\in \GL (d,\mathbf R)$.
By \eqref{calculitransform} we have
\begin{multline*}
a _1 \wpr _{\! A} \cdots \wpr _{\! A} a_N
=
e^{-i\scal {A_0D_\xi }{D_x}} ((e^{i\scal {A_0D_\xi }{D_x}}a _1)
\wpr  \cdots \wpr  (e^{i\scal {A_0D_\xi }{D_x}} a_N)),
\\[1ex]
A_0=A-\frac 12 I_d,
\end{multline*}
where $I_d$ is the $d\times d$ unit matrix (see (2.14) and (2.15)
in \cite{Toft15}).
If we combine this relation with
Proposition \ref{propCalculiTransfMod} and Theorem
\ref{Thm:MainThmOdd}, we get the following result.
The condition on the weight functions is
\begin{multline}\label{weightcondtcalc}
1 \lesssim \omega _0(T_A(X_N,X_0))\prod _{j=1}^N
\omega _j(T_A(X_{j},X_{j-1})),
\quad X_0,\dots ,X_N \in \rr {2d},
\end{multline}
where
\begin{multline}\label{Ttdef}
T_A(X,Y) =(y+A(x-y),\xi +A^*(\eta -\xi ),\eta -\xi , x-y),
\\[1ex]
X=(x,\xi )\in \rr {2d},\ Y=(y,\eta )\in \rr {2d}.
\end{multline}
(See (2.16) and (2.17) in \cite{Toft15}.)

%
%
%

\par

\begin{thm}\label{Thm:MainThmOdd2}
Let $A\in \GL (d,\mathbf R)$, $N\ge 3$ be odd, $\masfR _N$
be as in \eqref{Eq:HYfunctional}, $\masfQ _{0,N}$ and $\masfQ _N$
be as in \eqref{Eq:sfQfunctional1}, and let
$p_j,q_j\in [1,\infty ]$, $j=0,1,\dots , N$, be such that
\begin{equation}\label{Eq:pqConditionsBAgain}
\max \left ( \masfR _N({\textstyle{\frac 1{q'}}}) ,0 \right )
\le  \min 
\left ( \masfQ _{N}({\textstyle{\frac 1p}}), \masfQ _{0,N}({\textstyle{\frac 1{q'}}}),
\masfQ _N({\textstyle{\frac 1p,\frac 1q}}),
\masfR _N({\textstyle{\frac 1p}})\right ).
\end{equation}
Also let $\omega _j \in \mascP _E(\rr {4d})$, $j=0,1,\dots ,N$,
and suppose
\eqref{weightcondtcalc} and \eqref{Ttdef} hold. Then the map
\eqref{Eq:Weylmap}$'$ from $\maclS _{1/2}(\rr {2d}) \times \cdots \times
\maclS _{1/2}(\rr {2d})$ to $\maclS _{1/2}(\rr {2d})$
extends uniquely to a continuous and associative map from $M ^{p_1,q_1}
_{(\omega _1)}(\rr {2d}) \times \cdots \times M ^{p_N,q_N}
_{(\omega _N)}(\rr {2d})$ to $M ^{p_0',q_0'} _{(1/\omega _0)}(\rr {2d})$.
\end{thm}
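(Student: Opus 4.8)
The plan is to reduce Theorem \ref{Thm:MainThmOdd2} to Theorem \ref{Thm:MainThmOdd} by conjugating the general $A$-pseudo product back to the Weyl product, exactly as indicated in the displayed identity preceding the statement. Concretely, set $A_0=A-\frac 12 I_d$, and recall from \eqref{calculitransform} (applied with $A_1=A$, $A_2=\frac 12 I_d$) that
$$
a_1\wpr _{\! A}\cdots \wpr _{\! A}a_N
= e^{-i\scal {A_0D_\xi}{D_x}}\bigl((e^{i\scal {A_0D_\xi}{D_x}}a_1)\wpr \cdots \wpr (e^{i\scal {A_0D_\xi}{D_x}}a_N)\bigr)
$$
on $\maclS _{1/2}(\rr {2d})$. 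Thus the map \eqref{Eq:Weylmap}$'$ factors as: first apply $e^{i\scal {A_0D_\xi}{D_x}}$ to each input $a_j$; then apply the Weyl product map \eqref{Eq:Weylmap}; then apply $e^{-i\scal {A_0D_\xi}{D_x}}$ to the output. I would first carry out the argument at the level of test functions, so that all quantities are honest functions, and then extend by density/narrow-convergence exactly as in Theorem \ref{Thm:MainThmOdd}.

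The second step is to track the weights through the conjugation. By Proposition \ref{propCalculiTransfMod}, $e^{i\scal {A_0D_\xi}{D_x}}$ maps $M^{p_j,q_j}_{(\widetilde \omega _j)}$ homeomorphically onto $M^{p_j,q_j}_{((\widetilde\omega_j)_{A_0})}$, where the subscript $A_0$ denotes the transformation of the weight described in that proposition; a completely analogous statement holds for the output factor with $e^{-i\scal {A_0D_\xi}{D_x}}$. The point is to choose the intermediate Weyl-product weights $\widehat\omega_j$ so that (a) $\widehat\omega_j$ is obtained from $\omega_j$ by the appropriate $e^{\pm i\scal{A_0D_\xi}{D_x}}$-transformation (so that $a_j\in M^{p_j,q_j}_{(\omega_j)}$ iff $e^{i\scal{A_0D_\xi}{D_x}}a_j\in M^{p_j,q_j}_{(\widehat\omega_j)}$, and likewise for the output with $1/\omega_0$), and (b) the $\widehat\omega_j$ satisfy the Weyl weight condition \eqref{Eq:WeightCond}. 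One then verifies that condition (b) for the $\widehat\omega_j$ is \emph{equivalent} to the stated condition \eqref{weightcondtcalc}–\eqref{Ttdef} for the original $\omega_j$: this is precisely a change of variables $X_j\mapsto T_A(X_j,X_{j-1})$ in the integral/pointwise inequality, matching the way $\omega_A$ in Proposition \ref{propCalculiTransfMod} relates to $\omega_0$. The Lebesgue-exponent hypothesis \eqref{Eq:pqConditionsBAgain} is identical to \eqref{Eq:pqConditionsB}, so it is inherited verbatim, and Theorem \ref{Thm:MainThmOdd} applies to the Weyl product step with the weights $\widehat\omega_j$.

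The third step is to assemble the three maps and check continuity, associativity and uniqueness of the extension. Continuity is immediate since it is a composition of continuous maps: two homeomorphisms bracketing a continuous multilinear map. Associativity transfers from the Weyl-product case because $e^{i\scal{A_0D_\xi}{D_x}}$ intertwines $\wpr_{\!A}$ with $\wpr$, so grouping of factors is preserved. For uniqueness of the extension one argues exactly as in the proof of Theorem \ref{Thm:MainThmOdd}: $\maclS_{1/2}$ is dense (or narrowly dense, via Proposition \ref{narrowprop}) in each $M^{p_j,q_j}_{(\omega_j)}$, and since $e^{\pm i\scal{A_0D_\xi}{D_x}}$ are homeomorphisms on $\maclS_{1/2}'$ that restrict to homeomorphisms on these modulation spaces, they preserve both ordinary and narrow convergence, so the uniqueness already established for the Weyl product carries over.

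I expect the only real obstacle to be the bookkeeping in the second step: correctly identifying the weight transformation $\omega_j\mapsto\widehat\omega_j$ induced by $e^{\pm i\scal{A_0D_\xi}{D_x}}$ from Proposition \ref{propCalculiTransfMod} (including the sign and the adjoint $A^*$), and then verifying that substituting these $\widehat\omega_j$ into \eqref{Eq:WeightCond} reproduces \eqref{weightcondtcalc} with $T_A$ as in \eqref{Ttdef}. This is a deterministic but somewhat intricate affine change of variables in $\rr{2d}$ — essentially the content of (2.14)–(2.17) of \cite{Toft15} — and once it is pinned down the theorem follows immediately from Theorem \ref{Thm:MainThmOdd}.
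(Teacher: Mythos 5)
Your proposal is correct and follows essentially the same route as the paper: the paper obtains Theorem \ref{Thm:MainThmOdd2} precisely by writing $a_1\wpr_{\!A}\cdots\wpr_{\!A}a_N = e^{-i\scal{A_0D_\xi}{D_x}}\bigl((e^{i\scal{A_0D_\xi}{D_x}}a_1)\wpr\cdots\wpr(e^{i\scal{A_0D_\xi}{D_x}}a_N)\bigr)$ with $A_0=A-\frac12 I_d$ via \eqref{calculitransform}, then combining Proposition \ref{propCalculiTransfMod} with Theorem \ref{Thm:MainThmOdd}, the transformed weight condition being exactly \eqref{weightcondtcalc} with $T_A$ as in \eqref{Ttdef} (as in (2.14)--(2.17) of \cite{Toft15}). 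Your weight bookkeeping, associativity transfer and uniqueness-by-(narrow) density remarks match the intended argument.
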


\par

In the same way we get the following result by combining
\eqref{calculitransform} and Propositions
\ref{propCalculiTransfMod} and \ref{Prop:Prop2}. The details
are left for the reader.

\par

\begin{prop}\label{Prop:Prop2Ext}
Let $A\in \GL (d,\mathbf R)$, $N\ge 3$ be odd,
$p,p_j\in (0,\infty ]$, $j=1,\dots , N$,
and let
$\omega _j \in \mascP _E(\rr {4d})$, $j=0,1,\dots ,N$, and
suppose \eqref{weightcondtcalc} and \eqref{Ttdef} hold. 
Then the following is true:
\begin{enumerate}
\item[{\rm{(1)}}] if $p_0=p_N=p$, $p_j=\max (1,p)$ when $j\in [3,N-2]$ is odd
and $p_j=p'$ when $j$ is even, then the map \eqref{Eq:Weylmap}$'$ 
from $\maclS _{1/2}(\rr {2d}) \times \cdots \times
\maclS _{1/2}(\rr {2d})$ to $\maclS _{1/2}(\rr {2d})$
extends uniquely to a continuous and associative map from
$\splM ^{p_1} _{(\omega _1)}(\rr {2d}) \times \cdots
\times \splM ^{p_N} _{(\omega _N)}(\rr {2d})$ to
$\splM ^{p} _{(1/\omega _0)}(\rr {2d})$;

\par

\item[{\rm{(1)}}] if $p_j=p$ when $j$ is even and
$p_j=p'$ when is odd,
then the map \eqref{Eq:Weylmap}$'$
from $\maclS _{1/2}(\rr {2d}) \times \cdots \times
\maclS _{1/2}(\rr {2d})$ to $\maclS _{1/2}(\rr {2d})$
extends uniquely to a continuous and associative map from
$\splM ^{p_1} _{(\omega _1)}(\rr {2d}) \times \cdots
\times \splM ^{p_N} _{(\omega _N)}(\rr {2d})$ to
$\splM ^{p'} _{(1/\omega _0)}(\rr {2d})$.
\end{enumerate}
\end{prop}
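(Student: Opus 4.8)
The plan is to deduce the result from Proposition \ref{Prop:Prop2} by the same conjugation argument that produced Theorem \ref{Thm:MainThmOdd2} from Theorem \ref{Thm:MainThmOdd}. Put $A_0=A-\frac 12 I_d$, so that by \eqref{calculitransform} (cf. (2.14) and (2.15) in \cite{Toft15})
$$
a_1\wpr _{\! A}\cdots \wpr _{\! A}a_N
=
e^{-i\scal {A_0D_\xi}{D_x}}((e^{i\scal {A_0D_\xi}{D_x}}a_1)\wpr \cdots \wpr (e^{i\scal {A_0D_\xi}{D_x}}a_N))
$$
on $\maclS _{1/2}(\rr {2d})$. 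Set $b_j=e^{i\scal {A_0D_\xi}{D_x}}a_j$, $j=1,\dots ,N$. By Proposition \ref{propCalculiTransfMod} (used in the symplectic framework via the identification $\splM ^{p,q}_{(\omega )}=M^{p,q}_{(\omega _0)}$ of Section \ref{sec1}, and, when $p<1$, in its quasi-Banach version, which follows by the same proof), the maps $e^{\pm i\scal {A_0D_\xi}{D_x}}$ are homeomorphisms that carry $\splM ^{p_j}_{(\omega _j)}(\rr {2d})$ onto $\splM ^{p_j}_{(\widetilde\omega _j)}(\rr {2d})$, where $\widetilde\omega _j$ is obtained from $\omega _j$ by the substitution displayed in that proposition; the same statement for $j=0$ identifies the prospective target $\splM ^{p}_{(1/\omega _0)}(\rr {2d})$, respectively $\splM ^{p'}_{(1/\omega _0)}(\rr {2d})$, with the image under $e^{-i\scal {A_0D_\xi}{D_x}}$ of $\splM ^{p}_{(1/\widetilde\omega _0)}(\rr {2d})$, respectively $\splM ^{p'}_{(1/\widetilde\omega _0)}(\rr {2d})$.

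The only genuinely computational point is that, under this substitution, the weight hypothesis \eqref{weightcondtcalc} and \eqref{Ttdef} for $\omega _0,\dots ,\omega _N$ turns into the Weyl weight condition \eqref{Eq:WeightCond} for $\widetilde\omega _0,\dots ,\widetilde\omega _N$. This is precisely the identity recorded in (2.16) and (2.17) of \cite{Toft15}: one transports the affine substitution $T_A$ of \eqref{Ttdef} through the variable change of Proposition \ref{propCalculiTransfMod} until it reduces to the shift $(X,Y)\mapsto (X+Y,X-Y)$ appearing in \eqref{Eq:WeightCond}. Granting it, Proposition \ref{Prop:Prop2} applies to the Weyl product $b_1\wpr \cdots \wpr b_N$ with weights $\widetilde\omega _j$ and with the exponents of case (1) or case (2), and yields a continuous and associative extension of $(b_1,\dots ,b_N)\mapsto b_1\wpr \cdots \wpr b_N$ from $\splM ^{p_1}_{(\widetilde\omega _1)}(\rr {2d})\times \cdots \times \splM ^{p_N}_{(\widetilde\omega _N)}(\rr {2d})$ to $\splM ^{p}_{(1/\widetilde\omega _0)}(\rr {2d})$, respectively to $\splM ^{p'}_{(1/\widetilde\omega _0)}(\rr {2d})$ in case (2).

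It then remains to conjugate back: composing each input with $e^{i\scal {A_0D_\xi}{D_x}}$ and the output with $e^{-i\scal {A_0D_\xi}{D_x}}$ converts this extension into the asserted continuous extension of \eqref{Eq:Weylmap}$'$. Uniqueness is preserved because $e^{\pm i\scal {A_0D_\xi}{D_x}}$ are homeomorphisms of $\maclS _{1/2}(\rr {2d})$ and of $\maclS _{1/2}'(\rr {2d})$, so any two extensions of $\wpr _{\! A}$ would yield two extensions of $\wpr$, contradicting the uniqueness in Proposition \ref{Prop:Prop2}; associativity is inherited since each subproduct relation for $\wpr _{\! A}$ becomes, after the same conjugation, the corresponding relation for $\wpr$. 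The step to watch is thus the weight bookkeeping above, namely matching the matrices $A$ and $A^*$ and the signs in $T_A$ from \eqref{Ttdef} precisely against the substitution $\omega _A(x,\xi ,\eta ,y)=\omega _0(x-Ay,\xi -A^*\eta ,\eta ,y)$ of Proposition \ref{propCalculiTransfMod} and the argument reshuffling in \eqref{Eq:WeightCond}; everything else is formal.
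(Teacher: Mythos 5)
Your proposal is correct and is essentially the paper's own argument: the paper states that Proposition \ref{Prop:Prop2Ext} follows "by combining \eqref{calculitransform} and Propositions \ref{propCalculiTransfMod} and \ref{Prop:Prop2}" (details left to the reader), which is exactly your conjugation by $e^{\pm i\scal {A_0D_\xi}{D_x}}$ with $A_0=A-\frac 12 I_d$, the weight transport via \eqref{Ttdef} and (2.16)--(2.17) of \cite{Toft15}, and the reduction to the Weyl case. Your explicit remark that Proposition \ref{propCalculiTransfMod} must be invoked in its quasi-Banach version when $p<1$ is a sensible flag of a detail the paper also leaves implicit.
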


\par

Finally we prove a continuity result for the twisted convolution. 
The map \eqref{Eq:Weylmap} is then replaced by
\begin{equation}\label{Twistmap}
(a_1,a_2,\dots ,a_N)\mapsto a_1*_\sigma a_2*_\sigma \cdots *_\sigma a_N.
\end{equation}
The following
result follows immediately from Theorem \ref{Thm:MainThmOdd}.
Here the condition \eqref{Eq:WeightCond} is replaced by
\begin{multline}\label{weightcond3}
1 \lesssim \omega _0(X_N-X_0,X_N+X_0)\prod _{j=1}^N
\omega _j(X_j-X_{j-1},X_j+X_{j-1}),
\\[1ex]
X_0,X_1,\dots ,X_N\in \rr {2d}.
\end{multline}

\par

\begin{thm}\label{Thm:MainThmOddTwistConv}
Let $p_j,q_j\in [1,\infty ]$, $j=0,1,\dots , N$, and suppose that
\begin{equation*} 
\max \left ( \masfR _N({\textstyle{\frac 1{p'}}}) ,0 \right )
\le  \min 
\left ( \masfQ _{N}({\textstyle{\frac 1q}}), \masfQ _{0,N}({\textstyle{\frac 1{p'}}}),
\masfQ _N({\textstyle{\frac 1p,\frac 1q}}),
\masfR _N({\textstyle{\frac 1q}})\right ).
\end{equation*}
%
%
Suppose $\omega _j\in \mascP _E(\rr {4d})$, $j=0,1,\dots ,N$, satisfy
\eqref{weightcond3}. Then the map
\eqref{Twistmap} from $\maclS _{1/2}(\rr {2d}) \times \cdots \times
\maclS _{1/2}(\rr {2d})$ to $\maclS _{1/2}(\rr {2d})$
extends uniquely to a continuous and associative map from
$\splW^{p_1,q_1}_{(\omega _1)}(\rr {2d})
\times \cdots \times \splW^{p_N,q_N}_{(\omega _N)}(\rr {2d})$
to $\splW^{p_0',q_0'} _{(1/\omega _0)}(\rr {2d})$.
\end{thm}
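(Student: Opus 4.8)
The plan is to reduce the statement directly to Theorem~\ref{Thm:MainThmOdd} by conjugating with the symplectic Fourier transform. The two structural facts I would use are the intertwining relation \eqref{weyltwist2}, which gives $\mascF _\sigma (a\wpr b)=(2\pi )^{-d/2}(\mascF _\sigma a)*_\sigma (\mascF _\sigma b)$, and the identity \eqref{twistfourmod}, which, after applying the involution $\mascF _\sigma$ to both sides and relabelling, reads $\mascF _\sigma \splW ^{p,q}_{(\mu )}(\rr {2d})=\splM ^{q,p}_{(\mu _0)}(\rr {2d})$ with $\mu _0(X,Y)=\mu (Y,X)$. Since both $\wpr$ and $*_\sigma$ are associative, iterating \eqref{weyltwist2} $N-1$ times yields, first on $\maclS _{1/2}(\rr {2d})$ and with $c_N=(2\pi )^{(N-1)d/2}$,
\begin{equation*}
a_1*_\sigma \cdots *_\sigma a_N = c_N\,\mascF _\sigma \big ( (\mascF _\sigma a_1)\wpr \cdots \wpr (\mascF _\sigma a_N)\big ).
\end{equation*}

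Next I would put $\widetilde \omega _j(X,Y)=\omega _j(Y,X)$, so that $a_j\in \splW ^{p_j,q_j}_{(\omega _j)}(\rr {2d})$ is equivalent to $\mascF _\sigma a_j\in \splM ^{q_j,p_j}_{(\widetilde \omega _j)}(\rr {2d})$, and then verify that the hypotheses of Theorem~\ref{Thm:MainThmOdd} hold for the exponents $(q_j,p_j)$ and the weights $\widetilde \omega _j$ (here, as in Theorem~\ref{Thm:MainThmOdd}, $N\ge 3$ is odd). Writing \eqref{Eq:pqConditionsB} with $p_j$ and $q_j$ interchanged, and using that the index set $\Omega _N$ is symmetric, so that $\masfQ _N(x,y)=\masfQ _N(y,x)$, one recovers exactly the Lebesgue-exponent hypothesis of the present theorem. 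Substituting $\widetilde \omega _j(X,Y)=\omega _j(Y,X)$ into the weight condition \eqref{Eq:WeightCond} turns it into \eqref{weightcond3}. Hence Theorem~\ref{Thm:MainThmOdd} applies: the map $(a_1,\dots ,a_N)\mapsto (\mascF _\sigma a_1)\wpr \cdots \wpr (\mascF _\sigma a_N)$ extends uniquely to a continuous and associative map into $\splM ^{q_0',p_0'}_{(1/\widetilde \omega _0)}(\rr {2d})$.

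Finally I would translate back. Composing the extended Weyl-product map with $c_N\mascF _\sigma$ and using $\mascF _\sigma \splM ^{q_0',p_0'}_{(1/\widetilde \omega _0)}(\rr {2d})=\splW ^{p_0',q_0'}_{(1/\omega _0)}(\rr {2d})$ (again \eqref{twistfourmod}, since $(1/\widetilde \omega _0)(Y,X)=1/\omega _0(X,Y)$) yields the claimed mapping property for \eqref{Twistmap}. Uniqueness of the continuous extension and associativity are inherited, because $\mascF _\sigma$ is a homeomorphism of all the spaces in play and intertwines $\wpr$ with $*_\sigma$ up to positive constants; in particular the extended map coincides with the iterated twisted convolution on $\maclS _{1/2}(\rr {2d})$. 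I do not expect a genuine obstacle: once Theorem~\ref{Thm:MainThmOdd} is granted the argument is purely formal, and the only point needing care is the bookkeeping that matches \eqref{Eq:pqConditionsB} and \eqref{Eq:WeightCond} to their counterparts here under the Fourier conjugation.
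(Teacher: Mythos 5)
Your argument is correct and is exactly the route the paper intends: the paper derives Theorem \ref{Thm:MainThmOddTwistConv} "immediately" from Theorem \ref{Thm:MainThmOdd} by conjugating with $\mascF _\sigma$, using \eqref{weyltwist2} to intertwine $\wpr$ with $*_\sigma$ and \eqref{twistfourmod} to swap $\splM ^{p,q}$ with $\splW ^{q,p}$, which is precisely your reduction. Your bookkeeping (symmetry of $\masfQ _N$ under interchanging its two arguments, the passage from \eqref{Eq:WeightCond} to \eqref{weightcond3} via $\widetilde \omega _j(X,Y)=\omega _j(Y,X)$, and the identification of the target space) checks out, so the proposal matches the paper's proof.
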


\par

\par


\end{document}